\newtheorem{assumption}{Assumption}
\newtheorem{theorem}{Theorem}
\newtheorem{lemma}{Lemma}
\newtheorem*{remark}{Remark}
\newtheorem*{notation}{Notation}
\def\tsc#1{\csdef{#1}{\textsc{\lowercase{#1}}\xspace}}
\begin{document}
\let\WriteBookmarks\relax
\def\floatpagepagefraction{1}
\def\textpagefraction{.001}
\shorttitle{Visual Localization Algorithm}
\shortauthors{Jindi Zhong et~al.}

\title [mode = title]{An Efficient Algorithm for Learning-Based Visual Localization}

\author[1]{Jindi Zhong} \ead{zjd@sdust.edu.cn}
\credit{Conceptualization, Methodology, Software Development, Investigation, Formal Analysis, Writing – Original Draft}
\author[1]{Ziyuan Guo} \ead{skdgzy@sdust.edu.cn}
\credit{Algorithm Implementation, Writing – Review and Editing}

\author[1]{Hongxia Wang} \ead{whx1123@126.com}
\credit{Data Validation, Writing – Review and Editing}

\author[1, 2]{Huanshui Zhang}[orcid=0000-0002-8611-7327] \cormark[1] \ead{hszhang@sdu.edu.cn}
\credit{Supervision, Funding Acquisition, Project Administration, Correspondence}

\affiliation[1]{
	organization={College of Electrical Engineering and Automation, Shandong University of Science and Technology},
	addressline={No. 579, Qianwangang Road},
	city={Qingdao},
	postcode={266590},
	country={China}
}

\affiliation[2]{
	organization={School of Control Science and Engineering, Shandong University},
	addressline={No. 17923, Jingshi Road},
	city={Jinan}, 
	postcode={250061},
	country={China}
}

\cortext[1]{Corresponding author}

\begin{abstract}
This paper addresses the visual localization problem in Global Positioning System (GPS)-denied environments, where computational resources are often limited. To achieve efficient and robust performance under these constraints, we propose a novel algorithm. The algorithm stems from the optimal control principle (OCP). It incorporates diagonal information estimation of the Hessian matrix, which results in training a higher-performance deep neural network and accelerates optimization convergence. Experimental results on public datasets demonstrate that the final model achieves competitive localization accuracy and exhibits remarkable generalization capability. This study provides new insights for developing high-performance offline positioning systems.
\end{abstract}

\begin{keywords}
Optimal control principle \sep Optimization problem \sep Hessian Approximation \sep Visual localization
\end{keywords}

\maketitle

\section{Introduction}
\label{Introduction}
Visual localization, the task of estimating geographic coordinates or Six Degrees of Freedom poses from images, is a cornerstone of autonomous systems such as self-driving cars \citep{ref1} and augmented reality applications \citep{ref2}. While GPS and Light Detection and Ranging provide coarse localization, they suffer from signal loss (e.g., in urban canyons) and incur high costs. In contrast, vision-based methods offer a low-cost and universally applicable alternative by leveraging ubiquitous cameras.

While traditional geometry-based visual localization methods achieve high accuracy, they often lack robustness in dynamic environments.
Meanwhile, existing learning-based visual localization approaches also face a fundamental challenge: complex architectures achieve high precision but are computationally expensive, while simpler architectures are efficient but often lack the robustness for accurate prediction \citep{ref10}. This limitation motivates our work on developing efficient optimization strategies specifically tailored for resource-limited deployment.

Therefore, to holistically address this challenge, we propose an efficient learning method that integrates a novel optimization algorithm with a clean Convolutional Neural Network (CNN) architecture. This architecture, containing less than 1$\%$ of the parameters of ResNet-18 \citep{ref11}, forms a crucial part of our framework. Our approach is motivated by three key factors:
\begin{itemize}
	\item[]
	$\bullet$ To ensure a fair and focused evaluation of the optimization algorithm, isolated from the performance gains of large-scale models.
	
	$\bullet$ To directly address the practical constraints of data availability and hardware limitations, which are critical for real-world edge deployment \citep{ref10, ref12}.
	
	$\bullet$ To build on the observation that robust visual localization primarily depends on coarse geometric priors \citep{ref13} rather than excessively complex feature representations, thereby making a simple network adequate when combined with an effective optimizer.
\end{itemize}

To this end, we develop a novel second-order optimization algorithm grounded in the OCP. The core of our approach involves an efficient diagonal approximation of the Hessian matrix using Hutchinson's method, enhanced with exponential moving averages for both gradients and Hessian diagonals, and an adaptive step-size strategy. This design aims to navigate the non-convex loss landscape effectively while maintaining computational feasibility for resource-constrained devices.

The main contributions of this paper are summarized as follows:
\begin{itemize}
	\item[]
	$\bullet$ We propose a novel optimization algorithm based on the OCP, which incorporates an efficient diagonal approximation of the Hessian matrix.
	
	$\bullet$ We provide a theoretical analysis of the algorithm's convergence rate. Under non-convex settings, it is shown to achieve a convergence form similar to Adam-type algorithms, yet with a faster convergence rate.
	
	$\bullet$ Experiments on public datasets demonstrate that the proposed algorithm achieves competitive localization accuracy and remarkable generalization capability.
\end{itemize}

The remainder of this paper is organized as follows. \autoref{Related Work} reviews related work on visual localization and optimization algorithms. \autoref{Problem Formulation} formulates the problem. \autoref{The Optimization Algorithm} provides a detailed exposition of the OCP based algorithm design. \autoref{Simulation} presents experimental results and analysis. Finally, \autoref{Conclusion} concludes the paper with directions for future work.

\section{Related Work}
\label{Related Work}
\subsection{Visual Localization Methods}
Current visual localization systems can be broadly categorized into geometry-based and learning-based approaches. Geometry-based methods, such as Structure-from-Motion \citep{ref3} and Simultaneous Localization and Mapping \citep{ref4}, achieve high-precision localization by leveraging accurate feature correspondences and have demonstrated strong performance in many scenarios. However, they often face significant challenges in dynamic environments, where moving objects interfere with feature matching, and under appearance variations caused by changes in illumination, weather, or viewpoint.

These limitations have motivated the exploration of learning-based alternatives, which aim to enhance robustness by directly modeling visual cues rather than relying on handcrafted geometric pipelines. Within learning-based approaches, end-to-end deep learning approaches are particularly promising, as they bypass the fragility of traditional geometric pipelines (e.g., feature matching failures in texture-less regions) by directly learning visual patterns correlated with spatial coordinates \citep{ref5}. However, these deep learning solutions face their own significant challenges. Although complex architectures employing attention mechanisms \citep{ref7} and multi-scale fusion \citep{ref8} achieve high precision, they typically require massive annotated datasets, involve over 10 million parameters, and are highly sensitive to optimization strategies due to the non-convex loss landscape of coordinate regression \citep{ref9}.

The substantial computational and data requirements of complex architectures have spurred growing interest in the development of efficient and lightweight architectures for visual localization. This trend is primarily driven by the need for deployment on edge devices, which are subject to stringent constraints on memory, power consumption, and computational capabilities \citep{ref28}. Research in this domain generally follows two main approaches: architectural design and model compression. While these methods effectively address issues related to parameter volume and computational efficiency, they introduce a fundamental trade-off: a significant reduction in model capacity often leads to deteriorated localization accuracy. Consequently, the core challenge has shifted from computational efficiency alone to achieving an optimal balance between efficiency and accuracy under stringent resource constraints.

The performance of lightweight models is inherently constrained by the optimization process itself, making mere model reduction insufficient for achieving an optimal balance between accuracy and efficiency. Due to their limited capacity, lightweight architectures are inherently unable to capture complex features, necessitating highly effective optimizers to navigate their loss landscapes and maximize the potential of their limited parameters. This indicates that optimization is no longer just an implementation detail in the training process, but rather a critical enabler for practical lightweight visual localization. This insight directly motivates the present research.

\subsection{Optimization Algorithms}

The training effectiveness of deep learning models in visual localization is fundamentally dependent on the optimization algorithms employed.

Training CNNs to directly regress geographic coordinates from a single image constitutes an inherently complex, high-dimensional, and non-convex regression problem \citep{ref14}. The severe non-linearity arises from entangled factors including viewpoint variations, illumination changes, and scene geometry distortions, creating a rugged loss landscape with numerous suboptimal basins.

While first-order optimizers dominate due to their computational efficiency, their reliance solely on gradient information fundamentally limits their efficacy for high-precision coordinate regression. For example:
\begin{itemize}
	\item []
	$\bullet$ Stochastic Gradient Descent (SGD) \citep{ref15}: Suffers from slow convergence and sensitivity to learning rate tuning, often stagnating in shallow minima.
	
	$\bullet$ Adaptive Moment Estimation (Adam) \citep{ref16}: Mitigates some issues using adaptive learning rates, but exhibits biased gradient estimates and tends to converge to sharp minima \citep{ref17}, compromising pose accuracy.
	
	$\bullet$ Rectified Adam (RAdam) \citep{ref18}: Rectifies Adam's convergence instability early in training, but still fails to capture curvature information, limiting the final regression precision.
\end{itemize}

Second-order optimization addresses these limitations by incorporating the Hessian matrix, which encodes the local curvature of the loss function, delivering quadratic convergence and precise navigation of ravines \citep{ref19}. However, exact Hessian computation is infeasible for deep networks due to:
\begin{itemize}
	\item []
	$\bullet$ For high-dimensional optimization with $d$ parameters, the memory complexity scales as $\mathcal{O}(d^2)$.
	
	$\bullet$ Each iteration incurs an inversion cost of $\mathcal{O}(d^3)$.
	
	$\bullet$ In non-convex landscapes, the Hessian may be non-positive definite, potentially hindering convergence.
\end{itemize}
Practical approximations bridge this gap. For example:
\begin{itemize}
	\item []
	$\bullet$ Quasi-Newton methods \citep{ref20}: Approximate the inverse of the Hessian matrix iteratively but struggle with stochastic gradients.
	
	$\bullet$ Kronecker-factored approximations \citep{ref21}: Capture block-diagonal curvature for layers but scale poorly to large architectures.
	
	$\bullet$ Shampoo \citep{ref22}: Uses full-matrix preconditioning per tensor dimension but incurs high computational overhead.
\end{itemize}

In pursuit of a more efficient trade-off, diagonal Hessian approximations have emerged as a promising solution. The key advantage is that this method reduces memory complexity to $\mathcal{O}(d)$ and inversion cost to $\mathcal{O}(d)$. Hutchinson's method \citep{ref23} achieves an unbiased stochastic estimation of the diagonal or trace by combining Rademacher random vectors with matrix-vector multiplication, followed by element-wise multiplication and expectation calculation. This approach avoids the computational cost of full matrix operations on $\mathcal{O}(d^2)$, requiring only the storage and computational cost associated with $\mathcal{O}(d)$. 
This underpins modern methods such as AdaHessian \citep{ref24}, which achieve scalability through layer-wise diagonal approximation and adaptive learning rates. Although AdaHessian provides a scalable second-order optimization solution by employing a Hutchinson estimator-based diagonal Hessian approximation, its approach still has key limitations. Analysis reveals that AdaHessian's fixed approximation granularity (e.g., spatial averaging) and global learning rate adjustment mechanism remain suboptimal for visual coordinate regression tasks.

The algorithm proposed in this paper attempts to achieve improved solutions by preserving the distinctive properties of the Hessian diagonal elements and adopting an adaptive step-size strategy. This algorithm is developed based on the maximum principle, resulting in an iteration sequence that closely follows the optimal state trajectory of the corresponding OCP. The OCP method \citep{ref25} exhibits superlinear convergence under non-convex conditions. However, because its update process involves the inversion of the Hessian matrix, \cite{ref26} reduced the computational burden by replacing the matrix inversion with a tunable parameter matrix, and the modified method still demonstrates superlinear convergence in non-convex optimization. Nevertheless, under deep learning frameworks, the high computational and storage costs associated with the Hessian matrix make it difficult to apply such methods to large-scale parameter problems. Building upon the work of \cite{ref26}, this paper proposes an algorithm tailored for large-scale non-convex optimization problems. Specifically, the algorithm employs the Hutchinson's method to approximate the diagonal elements of the Hessian matrix, applies exponential moving averages to both the gradients and the approximated Hessian diagonals, and incorporates an adaptive step-size strategy to enhance robustness. The algorithm draws on the convergence rate of the generalized Adam framework \citep{ref27}, and under reasonable assumptions, achieves sublinear convergence.

\begin{notation}
	$\mathcal{O}$ describes the asymptotic order of a quantity, such as the computational/memory complexity of an algorithm (e.g., $\mathcal{O}(d)$) or the convergence rate of an optimization algorithm (e.g., $\mathcal{O}(1/T)$). $\|A\|$ denotes the Euclidean norm of a vector $A$. $\mathbb{E}[A]$ denotes the expectation of $A$. $|A|$ denotes the absolute value of $A$. $\langle \cdot, \cdot \rangle$ denotes the inner product of two vectors. $[\cdot]_{ij}$ represents the element in the $i$-th row and $j$-th column of a matrix. $\mathbb{R}$ denotes the set of real numbers. $I$ denotes the identity matrix. $\nabla f(x)$ and $\nabla^{2} f(x)$ represent the exact gradient and the exact Hessian matrix, respectively.
\end{notation}

\section{Problem Formulation}
\label{Problem Formulation}
This section establishes the problem formulation for the subsequently proposed OCP based algorithm. It begins by introducing the structure of the neural network, then focuses on elucidating how to transform it into a solvable optimization problem.

\subsection{CNN Architecture Overview}
The CNN consists of four convolutional layers with increasing depth. Each convolutional layer is followed by a rectified linear unit (ReLU) activation and a batch normalization layer to enhance feature representation and accelerate model convergence. Max pooling layers are incorporated to reduce the spatial dimensions while preserving key semantic information. To prevent overfitting, dropout regularization is applied with a certain probability. Finally, the convolutional feature maps are flattened and passed through a fully connected layer, which serves as the regression head to output continuous geographic coordinates.

The CNN architecture is illustrated in the figure below.
\begin{figure}[h]
	\centering
	\includegraphics[width=3in]{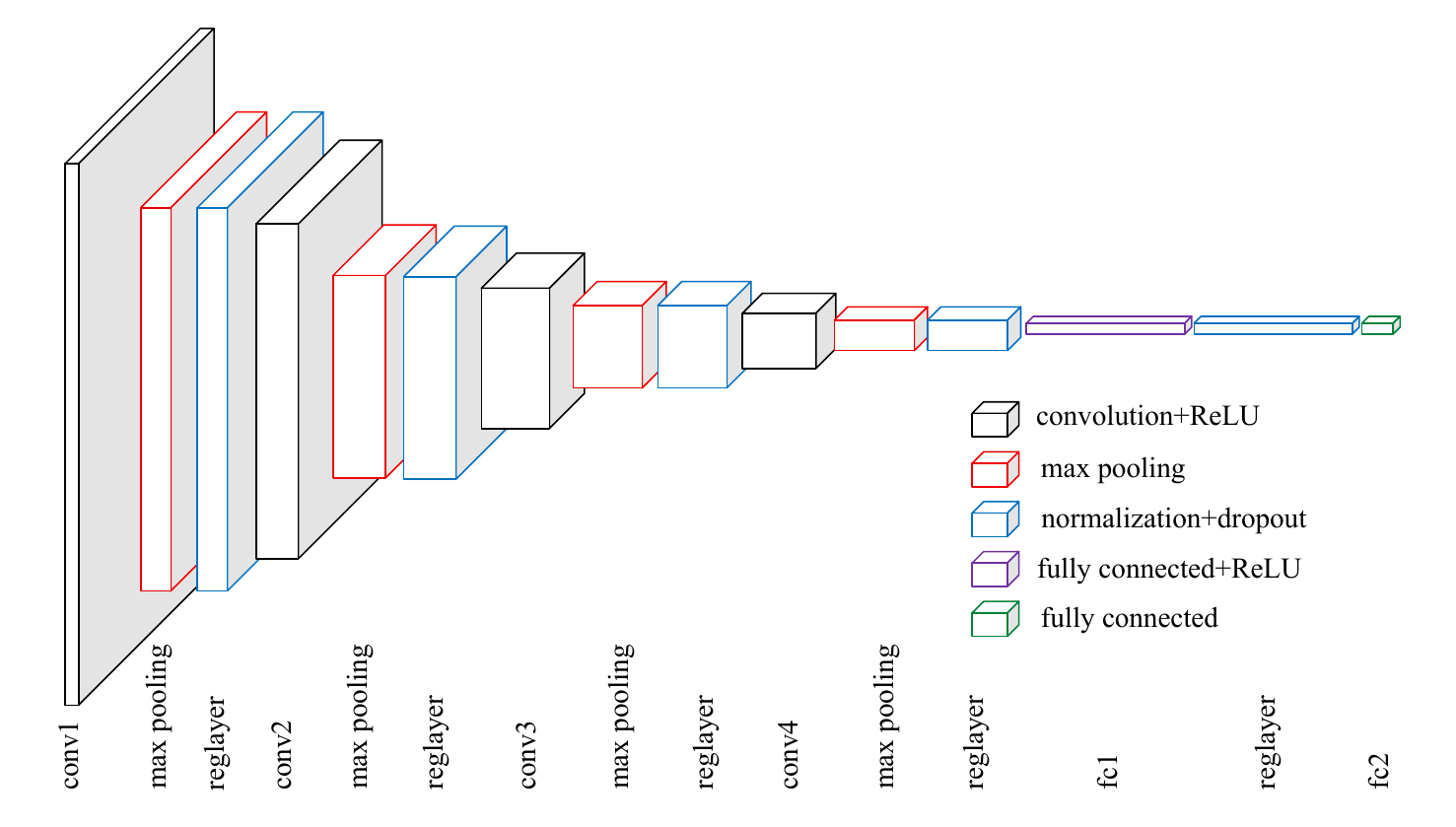}
	\caption{The CNN architecture}
	\label{cnn}
\end{figure}
The detailed information of each layer, including the number of input and output channels, the kernel size, and the filter stride, is summarized in the Table \ref{cnn:architecture} below.
\begin{table}[ht]
	\setlength{\tabcolsep}{2pt}
	\centering
	\caption{Information of Each Layer in The CNN Model}
	\label{cnn:architecture}
	\begin{tabular}{ c|c| c| c| c| c| c }
		\toprule
		Name & \multicolumn{1}{c|}{Input} & \multicolumn{1}{c|}{Output} &
		\multicolumn{1}{c|}{Kernel size} & \multicolumn{1}{c|}{Stride} & \multicolumn{1}{c|}{Padding} & \multicolumn{1}{c}{ReLU} \\
		\midrule 
		conv1 & 3 & 8 & 3 & 1 & 0 & \ding{51} \\
		conv2 & 8 & 16 & 3 & 1 & 0 & \ding{51} \\
		conv3 & 16 & 32 & 3 & 1 & 0 & \ding{51} \\
		conv4 & 32 & 32 & 3 & 1 & 0 & \ding{51} \\
		max pooling & \textbackslash & \textbackslash & 2 & 2 & 0 & \ding{55} \\
		fc1 & 1152 & 64 & \textbackslash & \textbackslash & \textbackslash & \ding{51} \\
		fc2 & 64 & 2 & \textbackslash & \textbackslash & \textbackslash & \ding{55} \\
		\midrule
		\multicolumn{7}{c}{reglayer: normalization + dropout}
		\\
		\bottomrule
	\end{tabular}
\end{table}

In this section, $c$, $s$, $i$, and $j$ denote the channel index, kernel index, row index, and column index, respectively. Their specific ranges depend on the corresponding layer.
The input to conv1 is $X^{(1)}_{c, i, j}$ ($c \in \{1, 2, 3\}, i \in \{1, \dots, 128\}, j \in \{1, \dots, 128\}$), $X^{(1)}_{c, i, j}$ denotes the value at the $i$-th row and the $j$-th column of the $c$-th channel of the input. The output is
\begin{equation}
	\mathcal{C}^{(1)}_{s,i,j} = \sum_{c=1}^{3} \sum_{u=0}^{2} \sum_{v=0}^{2} W^{(1)}_{s,c,u,v} \cdot X_{c,i+u,j+v} + b^{(1)}_k, \label{eq1}
\end{equation}
where $W^{(1)}_{s,c,u,v}$ represents the weight at position $(u,v)$ of the $s$-th convolutional kernel corresponding to the $c$-th input channel. $b^{(1)}_k$ is the bias of the $s$-th convolutional kernel. After processing by the convolutional kernels, the number of output channels is equal to the number of convolutional kernels (e.g. $s \in \{1, \dots, 8\}, i \in \{1, \dots, 126\}, j \in \{1, \dots, 126\}$).
The ReLU activation is
\begin{equation}
	\mathcal{R}^{(1)}_{s,i,j} = \max(0, \mathcal{C}^{(1)}_{s,i,j}),\label{eq2}
\end{equation}
the max pooling layer is
\begin{equation}
	\mathcal{P}^{(1)}_{s,i,j} = \max_{\substack{u \in \{0,1\} \\ v \in \{0,1\}}} \mathcal{R}^{(1)}_{s,2i-1+u,2j-1+v}, \label{eq3}
\end{equation}
in $\mathcal{P}^{(1)}_{s,i,j}$, $i$ and $j$ range from $1$ to $63$, respectively (e.g., $i \in \{1, \dots, 63\}$ and $j \in \{1, \dots, 63\}$).

The reglayer consists of two parts: batch normalization and dropout. The batch normalization process itself includes two components: normalization and the scale-shift transformation. Here, normalization refers to computing the mean and variance independently for each kernel $s$:
\begin{equation}
	\begin{aligned}
		\mu_s &= \frac{1}{N \cdot h' \cdot w'} \sum_{n,i,j} \mathcal{P}^{(1)}_{n,s,i,j}, \\
		\sigma^2_s &= \frac{1}{N \cdot h' \cdot w'} \sum_{n,i,j} (\mathcal{P}^{(1)}_{n,s,i,j} - \mu_s)^2, \label{eq4}
	\end{aligned}
\end{equation}
where $\mu_s$ and $\sigma^2_s$ are the mean and variance of the $s$-th channel, respectively. $N$ denotes the number of samples, and $h'$ and $w'$ denote the height and width of the feature map, respectively. Here, $n$ indexes the sample dimension, while $i$ and $j$ index the row and column positions within the feature map, respectively.

The scale-shift transformation is as follows:
\begin{equation}
	\mathcal{B}^{(1)}_{s,i,j} = \gamma_s^{(1)} \cdot \frac{\mathcal{P}^{(1)}_{n,s,i,j} - \mu_s}{\sqrt{\sigma^2_k + \epsilon}} + \xi_s^{(1)}, \label{eq5}
\end{equation}
where $\gamma_s^{(1)}$ and $\xi_s^{(1)}$ are learnable scalar parameters for the $s$-th channel, shared across all spatial locations and samples within the batch, and $\epsilon = 10^{-5}$ is a small constant for numerical stability. At the same time, $\mathcal{B}^{(1)}_{s,i,j}$ is randomly set to zero with probability $p$, and scaled accordingly, as follows:
\begin{equation}
	\mathcal{D}^{(1)}_{s,i,j} = 
	\begin{cases}
		\frac{B^{(1)}_{s,i,j}}{1 - p} & {\rm with \ probability} \ 1-p \\
		0 & {\rm with \ probability} \ p, \label{eq6}
	\end{cases}
\end{equation}
the convolution operations in the second, third, and fourth layers are defined in a similar manner.

The output of the fourth convolutional layer is $\mathcal{D}^{(4)} \in \mathbb{R}^{32 \times 6 \times 6}$. After flattening, it is represented as $d_f \in \mathbb{R}^{1152}$.

The output of the first fully connected layer (fc1) is:
\begin{equation}
	z_{i'} = \sum_{f=1}^{1152} W^{(fc1)}_{i',f} \cdot d_f + b^{(fc1)}_{i'}, \label{eq7}
\end{equation}
where $i' \in \{1,\dots,64\}$ indexes the output neurons of the fc1, and $b^{(fc1)}_{i'}$ is the bias term for the $i'$-th output neuron. The ReLU activation is applied as follows:
\begin{equation}
	a_{i'} = \max(0, z_{i'}). \label{eq8}
\end{equation}
The output of the second fully connected layer (fc2) is:
\begin{equation}
	\hat{y}_{j'} = \sum_{i'=1}^{64} W^{(fc2)}_{j',i'} \cdot a_{i'} + b^{(fc2)}_{j'}, \label{eq9}
\end{equation}
where $j' \in \{1, 2\}$ indexes the output neurons of the fc2, and $b^{(fc2)}_{j'}$ is the bias term for the $j'$-th output neuron. 

\subsection{Optimization Problem}
Based on the previously introduced neural network model, we reframe the network training process as a solvable optimization problem in which the parameters to be optimized consist of the weights across all layers, with the optimization objective directed toward minimizing localization error. Central to this reformulation is the definition of the following objective function intended for minimization:
\begin{equation}
	\min_{\substack{x}} f(x) = \frac{1}{N} \sum_{n=1}^{N} \sum_{j'=1}^{2} \left( \hat{y}_{n,j'} - y_{n,j'} \right)^2, \label{eq10}
\end{equation}
where $N$ is the number of samples, $\hat{y}_{i,n}$ and $y_{i,n}$ denote the predicted and ground truth values for sample $i$ and output dimension $n$, respectively. $x = \{ W^{(1)}, b^{(1)}, \gamma^{(1)}, \xi^{(1)}, \dots, \allowbreak W^{(fc2)}, b^{(fc2)}\}$
denotes the set of all optimization parameters, where $x \in \mathbb{R}^d$.

\section{The Optimization Algorithm}
\label{The Optimization Algorithm}
This section proposes an optimization algorithm, termed Diag-OCP, which stands for Diagonal Hessian approximated OCP-based algorithm. The algorithm integrates gradient and Hessian matrix information based on an OCP method \citep{ref25} and is theoretically proven to achieve a convergence rate comparable to that of the Adam under a rigorous non-convex optimization setting.

The OCP method proposes the following update rules \citep{ref25}:
\begin{equation}
	\begin{aligned}
		&x_{k+1} = x_{k} - \phi_{k}(x_k) \\
		&\phi_{l}(x_k) = (R + \nabla^{2} f(x_{k}))^{-1}[\nabla f(x_{k}) + R \phi_{l-1}(x_{k})] \\
		&\phi_{0}(x_{k}) = (R + \nabla^{2} f(x_{k}))^{-1} \nabla f(x_{k}),
	\end{aligned}
\end{equation}
since the OCP method involves inversion of the Hessian matrix, \cite{ref26} replaced $(R + \nabla^{2} f(x_{k}))^{-1}$ with a tunable matrix $M$ to reduce computational cost. The update rules are as follows:
\begin{equation}
	\begin{aligned}
		&x_{k+1} = x_{k} - \phi_{k}(x_{k}) \\
		&\phi_{l}(x_{k}) = M \nabla f(x_{k}) + (I - M \nabla^{2} f(x_{k})) \phi_{l-1}(x_{k}) \\
		&\phi_{0}(x_{k}) = M \nabla f(x_{k}). \label{eq15}
	\end{aligned}
\end{equation}

However, when applied to large-scale neural network models, \eqref{eq15} incurs high storage and computational costs due to the involvement of the Hessian matrix. When the parameter dimension is $\mathbb{R}^d$, the storage cost of the Hessian matrix is $\mathcal{O}(d^2)$, and the computational cost is also $\mathcal{O}(d^2)$, which is clearly impractical.

Therefore, the following algorithmic framework is introduced, which relies only on the diagonal elements of the Hessian matrix and estimates them using the Hutchinson's method. In this case, the computational complexity is reduced from $\mathcal{O}(d^2)$ to $\mathcal{O}(d)$, and the storage cost is reduced to $\mathcal{O}(d)$. The complete algorithmic framework is presented below:
\begin{equation}
	\begin{aligned}
		&x_{k+1} = x_{k} - \phi_{k}(x_{k}) \\
		&\phi_{l}(x_{k}) = M \hat{m}_{k} + (I - M \hat{D}_{k}) \phi_{l-1}(x_{k}) \\
		&\phi_{0}(x_{k}) = M \hat{m}_{k}, \label{eq16}
	\end{aligned}
\end{equation}
where $\hat{m}_{k}$ and $\hat{D}_{k}$ are the bias-corrected terms defined as follows:
\begin{equation}
	\begin{aligned}
		&\hat{m}_{k} = \frac{m_{k}}{1 - {\beta_{1}}^{k}} \\
		&\hat{D}_{k} = \frac{D_{k}}{1 - {\beta_{2}}^{k}}, \label{eq17}
	\end{aligned}
\end{equation}
here, $\beta_{1}$ and $\beta_{2}$ are tunable parameters. The exponential moving average terms, $m_{k}$ and $D_{k}$, are defined as follows:
\begin{equation}
	\begin{aligned}
		&m_{k} = \beta_{1} m_{k-1} + (1-\beta_{1}) g_{k}(x_{k}) \\
		&D_{k} = \beta_{2} D_{k-1} + (1 - \beta_{2}) H_{k}, \label{eq18}
	\end{aligned}
\end{equation}
where $g_{k}(x_k)$ is the stochastic gradient at the $k$-th iteration, and $H_k$ is the Hessian diagonal matrix estimated using the Hutchinson's method. The relationship between $g_{k}$ and the exact gradient is as follows, where the noise originates from mini-batch sampling:
\begin{equation}
	g(x_{k}) = \nabla f(x_{k}) + \zeta_{k}, \label{eq19}
\end{equation}
where $\zeta_k$ denotes gradient noise vector at the $k$-th iteration.

At the same time, the Hutchinson's method also performs estimation based on noisy information, and $H_k$ satisfies the following condition:
\begin{equation}
	H_{k} = \frac{1}{\mathcal{N}} \sum_{m=1}^{\mathcal{N}} v_{m} \odot ((\nabla^{2} f(x_{k}) + \varepsilon_{k}) v_{m}), \label{eq20}
\end{equation}
here, $\varepsilon_{k}$ denotes the random noise matrix at the $k$-th iteration, $v$ is the random vector in the Hutchinson's method, and $\mathcal{N}$ represents the number of samples of $v$.

To prevent overfitting and enhance the generalization ability of the model, we incorporate weight decay as a regularization technique during training. It is important to note that the subsequent theoretical analysis is conducted solely on the Algorithm \eqref{eq16} without the weight decay term, in order to isolate and clarify its convergence properties.
\begin{equation}
	x_k' = x_k(1 - \alpha \lambda)
\end{equation}
where $x_k$ denotes the parameters at iteration $k$, $\alpha$ is the learning rate (LR), and $\lambda$ is the weight decay coefficient.

The implementation procedure of the above algorithm is illustrated in the following pseudocode:
\begin{algorithm}[H]
	\caption{Diag-OCP \eqref{eq16}}\label{alg:alg1}
	\begin{algorithmic}
		\STATE 
		\STATE {\textsc{Initialization}}
		\STATE \hspace{0.5cm}Initialize matrix $M \gets \alpha I$
		\STATE \hspace{0.5cm}Initialization of parameters $\beta_1$ and $\beta_2$
		\STATE \hspace{0.5cm}Initialization of $m_0$, $D_0$ and $v_{m}$
		\STATE \hspace{0.5cm}Initialization of $x_0, \mathcal{N}$
		\STATE \hspace{0.5cm}$ k \gets 0 $
		\STATE 
		\STATE {\textsc{Algorithm Procedure}}
		\STATE \hspace{0.5cm}While $k < T$ do
		\STATE \hspace{0.5cm}\hspace{0.5cm}$\begin{aligned}
			H_{k} = \frac{1}{\mathcal{N}} \sum_{m=1}^{\mathcal{N}} v_{m} \odot ((\nabla^{2} f(x_{k}) + \varepsilon_{k}) v_{m})
		\end{aligned}$
		\STATE \hspace{0.5cm}\hspace{0.5cm}$m_{k} = \beta_{1} m_{k-1} + (1-\beta_{1}) g_{k}(x_{k})$
		\STATE \hspace{0.5cm}\hspace{0.5cm}$[H_k]_{ii} = \max([H_k]_{ii}, \mu)$
		\STATE \hspace{0.5cm}\hspace{0.5cm}$D_{k} = \beta_{2} D_{k-1} + (1 - \beta_{2}) H_{k}$
		\STATE \hspace{0.5cm}\hspace{0.5cm}$\hat{m}_{k} = \frac{m_{k}}{1 - {\beta_{1}}^{k}}$
		\STATE \hspace{0.5cm}\hspace{0.5cm}$\hat{D}_{k} = \frac{D_{k}}{1 - {\beta_{2}}^{k}}$
		\STATE \hspace{0.5cm}\hspace{0.5cm}$\phi_{0}(x_{k}) = M \hat{m}_k$
		\STATE \hspace{0.5cm}\hspace{0.5cm}$l \gets 0$
		\STATE \hspace{0.5cm}\hspace{0.5cm}\hspace{0.5cm}While $l < k$ do
		\STATE \hspace{0.5cm}\hspace{0.5cm}\hspace{0.5cm}\hspace{0.5cm}$l \gets l+1$
		\STATE \hspace{0.5cm}\hspace{0.5cm}\hspace{0.5cm}\hspace{0.5cm}$\begin{aligned}
			\phi_{l}(x_{k}) &= M \hat{m}_k + (I - M \hat{D}_k) \phi_{l-1}(x_{k})
		\end{aligned}$
		\STATE \hspace{0.5cm}\hspace{0.5cm}\hspace{0.5cm}Return $\phi_{k}(x_{k})$
		\STATE \hspace{0.5cm}\hspace{0.5cm}$x_k' = x_k(1 - \alpha \lambda)$
		\STATE \hspace{0.5cm}\hspace{0.5cm}$x_{k+1} = x_k' - \phi_{k}(x_{k})$
		\STATE \hspace{0.5cm}\hspace{0.5cm}$k \gets k+1$
		\STATE \hspace{0.5cm}Return $x_{T}$
	\end{algorithmic}
	\label{alg1}
\end{algorithm}

To demonstrate the convergence rate of the Diag-OCP, we propose the following reasonable assumptions.
\begin{assumption} \label{ass:main}
	The following assumptions hold throughout our analysis:
	\begin{enumerate}[label=(A\arabic*), leftmargin=*, nosep]
		\item \label{ass:a1} (Smoothness and Boundedness) The objective function $f$ is differentiable and has $L$-Lipschitz continuous gradient, i.e., $\forall x, y \in \mathbb{R}^d, \exists L > 0: \|\nabla f(x) - \nabla f(y)\| \leq L\|x - y\|$. Moreover, $f$ is lower bounded: $\exists f^* > -\infty$ such that $f(x) \geq f^*$.
		
		\item \label{ass:a2} (Bounded Gradients and Noise) The true gradients and stochastic gradients are bounded, i.e., $\|\nabla f(x)\| \leq \mathcal{H}$, $\|g_{k}(x_k)\| \leq \mathcal{H}_g$, and the gradient noise has bounded variance: $\mathbb{E}[\|\zeta_k\|^2] \leq \sigma_g^2$. Here, $\mathcal{H}$, $\mathcal{H}_g$, and $\sigma_g^2 > 0$ are constants.
		
		\item \label{ass:a3} (Unbiased Noise) The gradient and Hessian estimation noises are unbiased, i.e., $\mathbb{E}[\zeta_k | x_k] = 0$ and $\mathbb{E}[\varepsilon_k | x_k] = 0$. Furthermore, both $\zeta_k$ and $\varepsilon_k$ are independent of the historical information $\mathcal{F}_{k-1} = \sigma(\{g_i, H_i\}_{i=1}^{k-1})$.
		
		\item \label{ass:a4} (Bounded Hessian Approximation) The estimated Hessian matrix is bounded from below and above: $\mu I \preceq H_k \preceq G_d I$ for constants $\mu, G_d > 0$. This is enforced by our diagonal clipping procedure in Algorithm 1.
		
		\item \label{ass:a5} (Stability of Update Matrix) The matrix $I - M\hat{D}_k$ is stable, i.e., $\|I - M\hat{D}_k\| < 1$.
	\end{enumerate}
\end{assumption}
The convergence rate of the Diag-OCP is established under Assumption \ref{ass:main}. It is crucial to address the practical validity of these assumptions within the context of deep learning, particularly concerning the smoothness requirement \ref{ass:a1} and the noise model.

Firstly, although the use of non-smooth activation functions such as ReLU appears to contradict the twice continuously differentiable requirement in \ref{ass:a1}, our analysis focuses on the effective behavior of the optimization process. In practice, due to floating-point precision limitations and random initialization, the input to a ReLU neuron is almost never exactly zero, rendering the loss function differentiable almost everywhere with respect to the parameters. Moreover, modern deep learning frameworks define a subgradient at zero, ensuring that the computation remains well-posed. Therefore, the smoothness assumption provides a valid theoretical framework for analyzing the algorithm's performance in practical scenarios. Experimental results on standard datasets further confirm that the algorithm converges reliably in practice.

Secondly, the stochastic noise models for the gradient and Hessian in \eqref{eq19} and \eqref{eq20} are not arbitrary additions but precise abstractions of the algorithm's inherent stochasticity. The gradient noise $\zeta_k$ arises naturally from mini-batch sampling during training. Similarly, the Hessian noise $\varepsilon_k$ originates from the Hutchinson estimator's reliance on random vectors and the underlying stochastic gradients. Our assumptions on this noise (unbiasedness and bounded variance in \ref{ass:a2}, \ref{ass:a3}) are standard and mild, reflecting the typical environment of stochastic optimization. The convergence rate under these conditions demonstrates the algorithm's robustness to the noise sources present in actual deep learning training.

Based on the reasonable Assumption \ref{ass:main} above, we have the following theorem:
\begin{theorem} \label{thm:convergence}
	Suppose that Assumption \ref{ass:main} holds. By choosing an appropriate matrix $M$ and setting the exponential moving average parameters $\beta_1, \beta_2 \in [0, 1)$, we can establish the non-asymptotic convergence guarantee for Diag-OCP \eqref{eq16} (without weight decay). Our analysis shows that the algorithm achieves a convergence rate of $\mathcal{O} ( \frac{1}{T} )$ for any finite $T > 0$. Specifically, it holds that:
	\begin{equation}
		\mathop{\mathrm{min}}\limits_{k \in [1,T]}\mathbb{E} \left[ \| \nabla f(x_k) \|^2 \right] \leq \mathcal{O} ( \frac{1}{T} )
	\end{equation}
\end{theorem}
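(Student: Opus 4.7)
The plan is to first rewrite Diag-OCP as a preconditioned stochastic update. Iterating the inner recursion $\phi_l=M\hat m_k+(I-M\hat D_k)\phi_{l-1}$ from $\phi_0=M\hat m_k$ yields $\phi_k(x_k)=\sum_{j=0}^{k}(I-M\hat D_k)^j M\hat m_k$, and by \ref{ass:a5} this geometric series collapses to $\phi_k(x_k)=(I-(I-M\hat D_k)^{k+1})\hat D_k^{-1}\hat m_k$, using that $M=\alpha I$ commutes with $\hat D_k$. Defining the preconditioner $P_k:=(I-(I-M\hat D_k)^{k+1})\hat D_k^{-1}$, the algorithm reads $x_{k+1}=x_k-P_k\hat m_k$. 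Combining \ref{ass:a4} with the bias correction in \eqref{eq17} and \ref{ass:a5} yields deterministic spectral bounds $0<c_\ell I\preceq P_k\preceq c_u I$, where $c_\ell,c_u$ depend only on $\mu,G_d,\alpha,\beta_2$. This reduction places Diag-OCP inside the adaptive preconditioned framework of \cite{ref27}.

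Next, I would invoke the $L$-smoothness descent inequality from \ref{ass:a1},
\[
f(x_{k+1})\le f(x_k)-\langle\nabla f(x_k),P_k\hat m_k\rangle+\tfrac{L}{2}\|P_k\hat m_k\|^2,
\]
and decompose $\hat m_k=\nabla f(x_k)+b_k+n_k$, where $b_k$ aggregates the smoothness-induced drift between $\nabla f(x_k)$ and the EMA of earlier true gradients and $n_k$ collects the martingale-difference noise produced by the mini-batch terms $\zeta_j$, $j\le k$. The main piece $\langle\nabla f(x_k),P_k\nabla f(x_k)\rangle\ge c_\ell\|\nabla f(x_k)\|^2$ provides the contraction; the drift cross-term $\langle\nabla f(x_k),P_k b_k\rangle$ is controlled by Young's inequality together with \ref{ass:a1}, \ref{ass:a2} and the $\beta_1^{k-j}$ decay weights encoded in $b_k$; and $\tfrac{L}{2}\|P_k\hat m_k\|^2$ is bounded through $\|P_k\|\le c_u$ and the uniform moment bound on $\hat m_k$ coming from $\|g_k\|\le\mathcal H_g$.

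Taking total expectation, the noise cross-term $\langle\nabla f(x_k),P_k n_k\rangle$ vanishes up to its current-iteration component, whose variance is controlled by $\sigma_g^2$ via \ref{ass:a3}; the Hutchinson noise $\varepsilon_k$ enters only through $P_k$, and since only spectral bounds on $P_k$ are used, its residual bias relative to $\operatorname{diag}(\nabla^2 f)$ is harmless. Summing the descent inequality for $k=1,\dots,T$, telescoping with $f(x_{T+1})\ge f^*$ from \ref{ass:a1}, and collecting the error contributions gives
\[
c_\ell\sum_{k=1}^{T}\mathbb{E}\|\nabla f(x_k)\|^2 \le (f(x_1)-f^*)+\mathcal{O}(1),
\]
where the $\mathcal{O}(1)$ term absorbs the early-stage bias-correction residuals and the aggregate variance contribution, kept finite by the spectral contraction in $P_k$ and the $\beta_1,\beta_2<1$ decays. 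Dividing by $T$ and using $\min_k\le\tfrac1T\sum_k$ produces the stated $\min_{k\in[1,T]}\mathbb{E}\|\nabla f(x_k)\|^2\le\mathcal{O}(1/T)$ rate.

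The hard part will be the cross term $\langle\nabla f(x_k),P_k(\hat m_k-\nabla f(x_k))\rangle$: $P_k$ depends on all past Hutchinson samples through $\hat D_k$, while $\hat m_k$ depends on all past mini-batch noise through the EMA, so a naive independence split between the two factors fails. The remedy is to isolate, at step $k$, the zero-mean contributions of the current $g_k$ and of the Hutchinson vectors $v_m$ inside $H_k$ (both unbiased given $\mathcal F_{k-1}$ by \ref{ass:a3}) and to treat the historical remainder using only the deterministic spectral bounds on $P_k$ together with the $\beta_1$-geometric tail of the EMA weights, which is precisely the device used in the generalized Adam analysis of \cite{ref27} to secure an $\mathcal{O}(1/T)$-type rate for adaptive second-order methods.
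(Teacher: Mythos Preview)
Your opening move---collapsing the inner recursion to $x_{k+1}=x_k-P_k\hat m_k$ with $P_k=(I-(I-M\hat D_k)^{k+1})\hat D_k^{-1}$ and extracting two-sided spectral bounds on $P_k$---matches the paper's Lemmas~\ref{lemma1}--\ref{lemma4}. The divergence is in how momentum is handled afterward. You apply the descent lemma directly to $x_k$ and split $\hat m_k=\nabla f(x_k)+b_k+n_k$. The paper instead introduces the auxiliary sequence $z_k=x_k+\tfrac{\beta_1}{1-\beta_1}(x_k-x_{k-1})$, applies $L$-smoothness to $z_k$, and decomposes $\langle\nabla f(z_k),z_{k+1}-z_k\rangle$ into a main descent term $-\tfrac{1}{1-\beta_1^k}\langle\nabla f(x_k),\Gamma_k\nabla f(x_k)\rangle$ plus three residuals $R_k^{(1)},R_k^{(2)},R_k^{(3)}$: one carrying the preconditioner change $\Gamma_{k-1}-\tfrac{1-\beta_1^{k-1}}{1-\beta_1^k}\Gamma_k$, one carrying the fresh noise $\zeta_k$, and one carrying the Lipschitz gap $\nabla f(z_k)-\nabla f(x_k)$.

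The $z_k$ sequence is precisely the generalized-Adam device of \cite{ref27} that you cite: it is engineered so that the EMA history telescopes in $z_{k+1}-z_k$, leaving as ``drift'' only the \emph{change} in preconditioner between consecutive steps, rather than the full gradient-history bias $b_k=\sum_{j<k}w_j(\nabla f(x_j)-\nabla f(x_k))$ that your decomposition has to control by multi-step Lipschitz telescoping. Your route stays on the original iterates and avoids bounding $\nabla f(z_k)-\nabla f(x_k)$ separately; the paper's route trades that for a residual structure in which the geometric factors $\rho^k$ and $\beta_1^k$ appear explicitly and are then summed. If you want to align with the paper, insert the $z_k$ sequence before invoking smoothness; the remainder of your outline---the spectral bounds on $P_k$, isolating the fresh noise contribution, and falling back to deterministic bounds on $P_k$ whenever it is entangled with past randomness---coincides with what the paper actually does.
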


The proof of the Theorem \ref{thm:convergence} is provided in detail in Appendix.

\begin{remark}
	Although both the proposed algorithm and Adam exhibit sublinear convergence properties, there exists an essential difference in their convergence rates. The proposed algorithm achieves a convergence rate of $\mathcal{O} ( \frac{1}{T} )$, which is significantly better than Adam's rate of $\mathcal{O} (\frac{\log T}{\sqrt{T}})$. Theoretical analysis demonstrates that the proposed algorithm attains a faster convergence speed and superior asymptotic performance when the number of iterations is sufficiently large.
\end{remark}

\section{Simulation}
\label{Simulation}
In the simulation validation section, the publicly available KITTI dataset \citep{ref29} was utilized to evaluate the proposed algorithm against five widely used deep learning-based algorithms. All experiments are designed under resource-limited constraints, employing lightweight networks and a limited number of training iterations to simulate edge learning scenarios. Furthermore, we incorporate weight decay during the parameter update step to enhance the practical performance of the algorithm, emphasizing that this practical modification is not included in the theoretical analysis.
\subsection{Experimental Setup}
The parameters of the Diag-OCP are set as follows: parameters $\beta_{1}$, $\beta_{2}$, $m_{0}$, $D_{0}$, and $\mathcal{N}$ are assigned values of 0.9, 0.999, 0, 0, and 1, respectively. All convolutional and fully-connected layers are initialized using Kaiming uniform initialization, which is suitable for ReLU activations. In the dropout layer, the dropout probability $p$ is set to 0.3, which is an empirically chosen value. Batch normalization layers are initialized with scale parameter $\gamma=1$ and shift parameter $\xi=0$. The elements of the random vector $v$ are drawn from a standard normal distribution. A fixed random seed is set to ensure reproducibility of the initial weights. The weight decay coefficient $\lambda$ is set to 0.008. To facilitate parameter tuning, a diagonal matrix $M$ with identical diagonal entries, denoted by $\alpha I$ is employed. The selection of parameter $\alpha$ is detailed in Section \ref{Learning Rate Tuning Strategy}.

All experiments were conducted in an environment utilizing PyTorch 1.12.1 (with CUDA 11.6) and Python 3.7.12. The hardware platform featured an NVIDIA GeForce GTX 1650 GPU (7GB VRAM) and an AMD Ryzen 5 4600H CPU, under the Windows 10 operating system.

\subsection{Learning Rate Tuning Strategy}
\label{Learning Rate Tuning Strategy}
In this study, we adopted a staged hybrid interval sampling strategy to search for an appropriate LR. This strategy first performed a coarse search on a logarithmic scale to cover critical orders of magnitude (e.g., from $10^{-1}$ to $10^{-4}$), which enabled the rapid localization of the effective learning rate range. Subsequently, we conducted geometric subdivisions within the promising order of magnitude (e.g., subdividing around $10^{-3}$ into $1 \times 10^{-3}$, $5 \times 10^{-4}$, and $1 \times 10^{-4}$). This approach combined the breadth of logarithmic search with the precision of geometric subdivision, thereby avoiding the risk of missing important intervals inherent in purely logarithmic sampling, while also overcoming the inefficiency of linear intervals within small learning rate ranges. Experimental results demonstrated that this strategy efficiently balanced search range and tuning precision, making it suitable for the empirical optimization of learning rates in deep neural networks. The specific process and results of the step size adjustment are presented in Table \ref{tab:alg_results} and Figure \ref{fig:all_algorithms} below.
\subsection{Ablation Study on Clipping Threshold}
To investigate the impact of enforcing a lower bound on the estimated diagonal Hessian matrix, we conducted an ablation study by clamping negative or near-zero values to three different thresholds: 0.001, 0.0001, and 0.00001. The clamping operation ensures the diagonal curvature matrix remains positive definite, thereby avoiding numerical instability in the inverse scaling. All experiments were conducted using a fixed LR of 0.05.

The results are shown in Figure \ref{fig_mu}, where solid lines denote training loss and dashed lines represent validation loss. Each color corresponds to a different clamping threshold, along with the original (unclamped) baseline. As observed, the training and validation curves for the three thresholds are nearly identical and closely match the performance of the unclamped baseline. Notably, clamping appears to slightly accelerate the early-stage empirical loss reduction rate. These findings indicate that imposing a small positive lower bound on the estimated Hessian diagonal does not negatively affect the rate of loss reduction, and may even provide stability benefits during the initial training phase.
\begin{figure}[ht]
	\centering
	\includegraphics[width=3in]{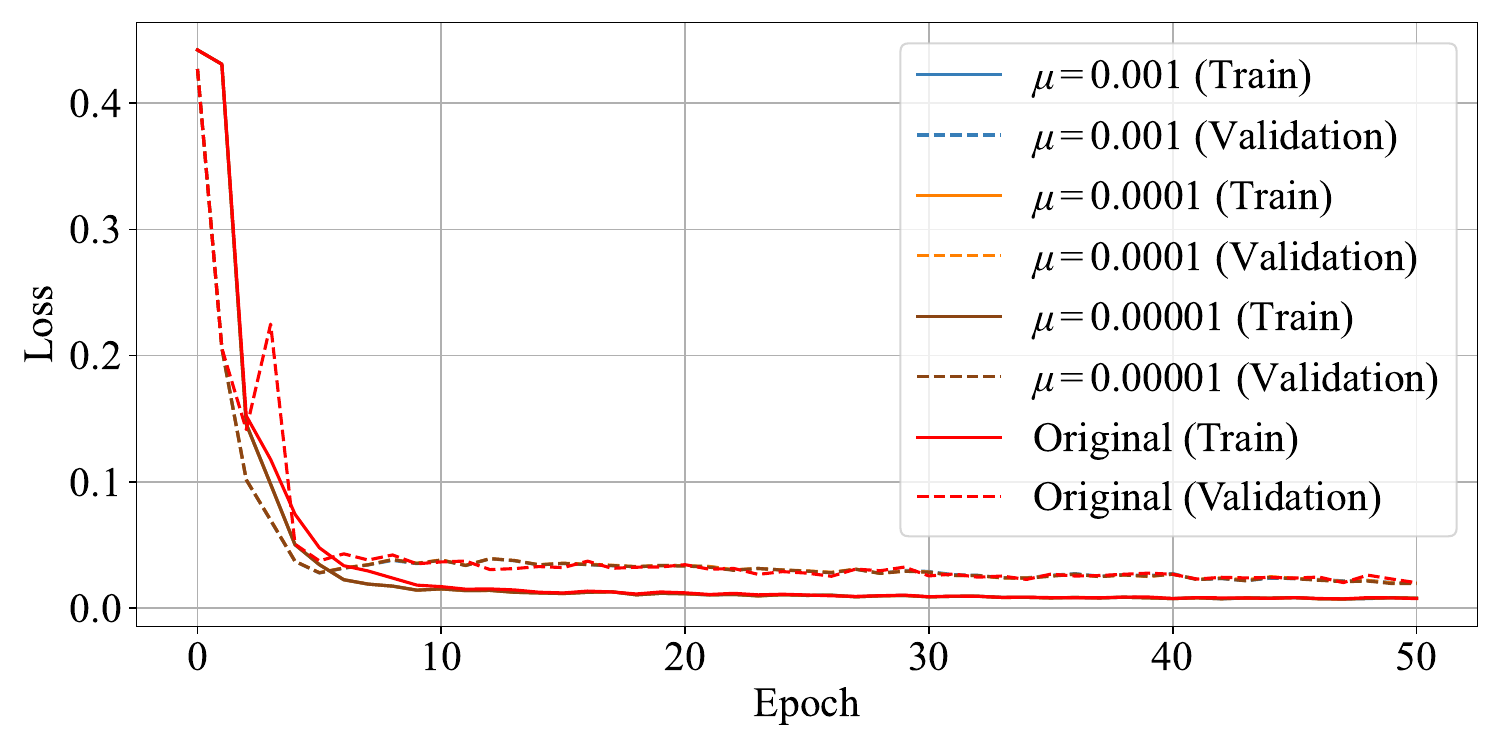}
	\caption{Effect of clamping threshold $\mu$}
	\label{fig_mu}
\end{figure}

Although the training and validation curves under different clamping thresholds (0.001, 0.0001, and 0.00001) were largely similar, the setting $\mu$ = 0.0001 consistently achieved the lowest training and validation losses. Based on this observation, we selected $\mu$ = 0.0001 as the default lower bound for the diagonal Hessian approximation in the following experiments.
\subsection{Comparative Analysis of Optimization Algorithms}
As shown in Figure \ref{fig_heatmap}, the heatmap compares the validation loss of different algorithms after 50 iterations across a range of LRs. Diag-OCP maintains a low loss (0.0135–0.0285) within the 0.005 to 0.05 interval, with a uniform color distribution, indicating its insensitivity to hyperparameter settings. In contrast, AdaHessian exhibits a sharp increase in loss at LR = 0.01, followed by a sudden decrease at LR = 0.05, demonstrating its instability under large learning rates.
\begin{figure}[ht]
	\centering
	\includegraphics[width=3in]{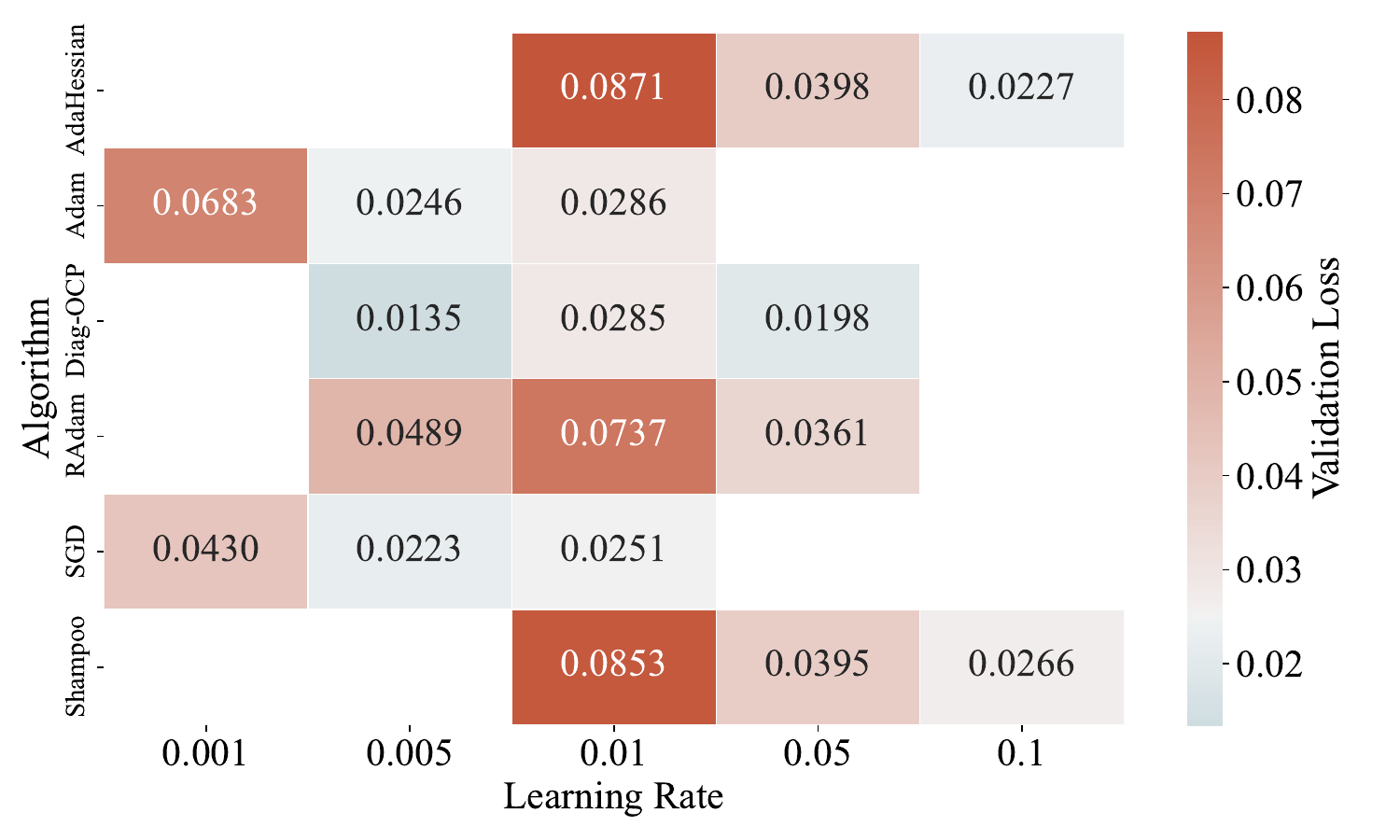}
	\caption{Parameter sensitivity}
	\label{fig_heatmap}
\end{figure}

\begin{table*}[h]
	\centering
	\caption{Comparison results of the six algorithms at 50 and 150 iteration steps}
	\label{tab:alg_results}
	\begin{tabular*}{0.85\linewidth}{ c | c | c | c | c | c }
		\toprule
		Algorithm & Iteration step & Tuned parameter & Train loss & Validation loss & Minimum validation loss \\
		\midrule 
		\multirow{2}{*}{Adam} 
		& 50  & 0.005 & 0.0094 & 0.0246 & 0.0220 \\
		& 150 & 0.005 & 0.0063 & 0.0093 & 0.0058 \\ \midrule
		\multirow{2}{*}{Diag-OCP} 
		& 50  & 0.005  & 0.0078 & 0.0139 & 0.0102 \\
		& 150 & 0.005  & 0.0059 & 0.0041 & 0.0029 \\ \midrule
		\multirow{2}{*}{AdaHessian} 
		& 50  & 0.1   & 0.0078 & 0.0227 & 0.0227 \\
		& 150 & 0.1   & 0.0062 & 0.0213 & 0.0117 \\ \midrule
		\multirow{2}{*}{RAdam} 
		& 50  & 0.05  & 0.0095 & 0.0361 & 0.0276 \\
		& 150 & 0.05  & 0.0049 & 0.0176 & 0.0033 \\ \midrule
		\multirow{2}{*}{SGD} 
		& 50  & 0.005 & 0.0098 & 0.0223 & 0.0223 \\
		& 150 & 0.005 & 0.0068 & 0.0151 & 0.0142 \\ \midrule
		\multirow{2}{*}{Shampoo} 
		& 50  & 0.1   & 0.0147 & 0.0266 & 0.0257 \\
		& 150 & 0.1   & 0.0083 & 0.0204 & 0.0113 \\
		\bottomrule
	\end{tabular*}
\end{table*}

Figure \ref{fig:all_algorithms} presents the training processes of six algorithms under different LRs, further illustrating the hyperparameter tuning process and the sensitivity of each algorithm to LR settings.

\begin{figure*}[ht]
	\centering
	\subfloat[Adam]{%
		\includegraphics[width=0.32\textwidth]{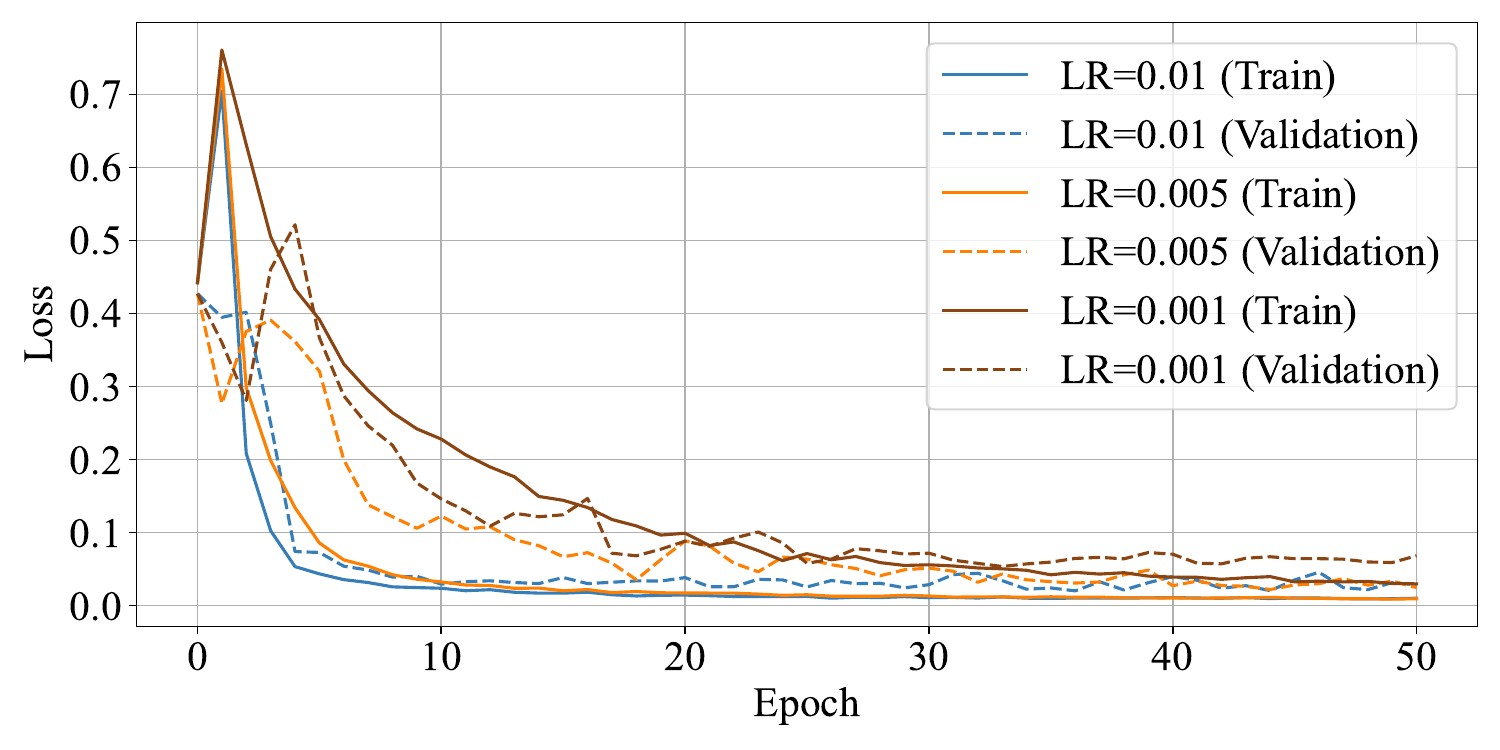}%
		\label{fig_4}}
	\hfill
	\subfloat[Diag-OCP]{%
		\includegraphics[width=0.32\textwidth]{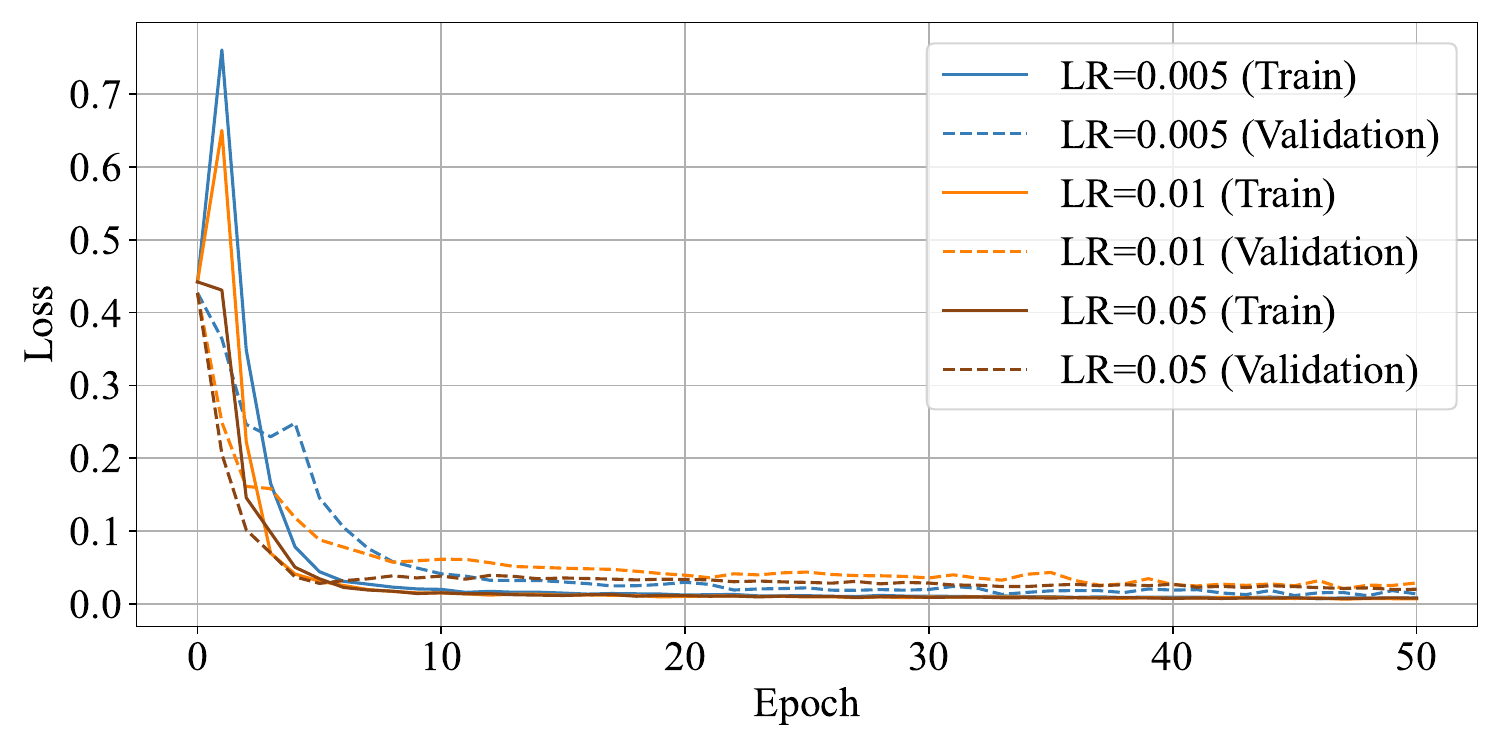}%
		\label{fig_5}}
	\hfill
	\subfloat[AdaHessian]{%
		\includegraphics[width=0.32\textwidth]{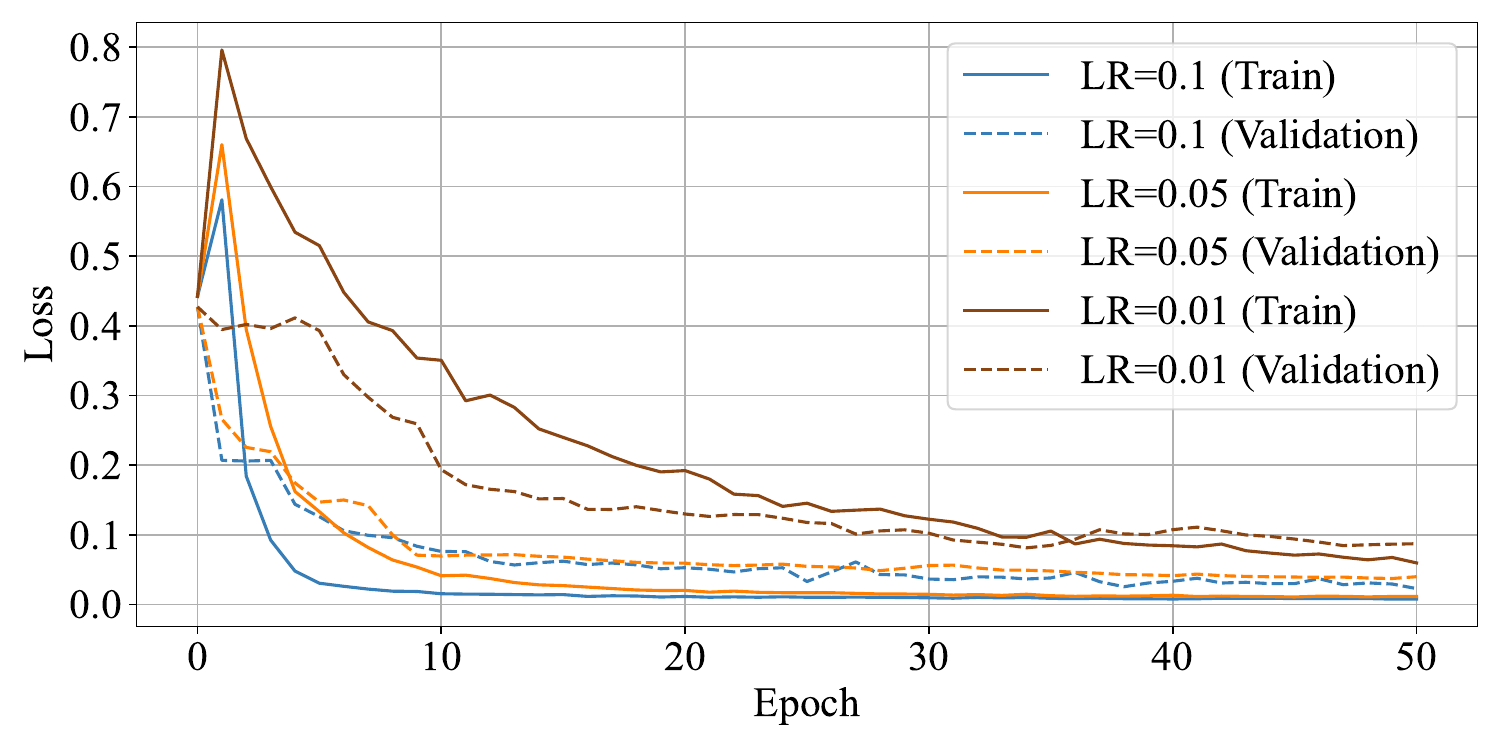}%
		\label{fig_6}}\\[1ex] 
	
	\subfloat[RAdam]{%
		\includegraphics[width=0.32\textwidth]{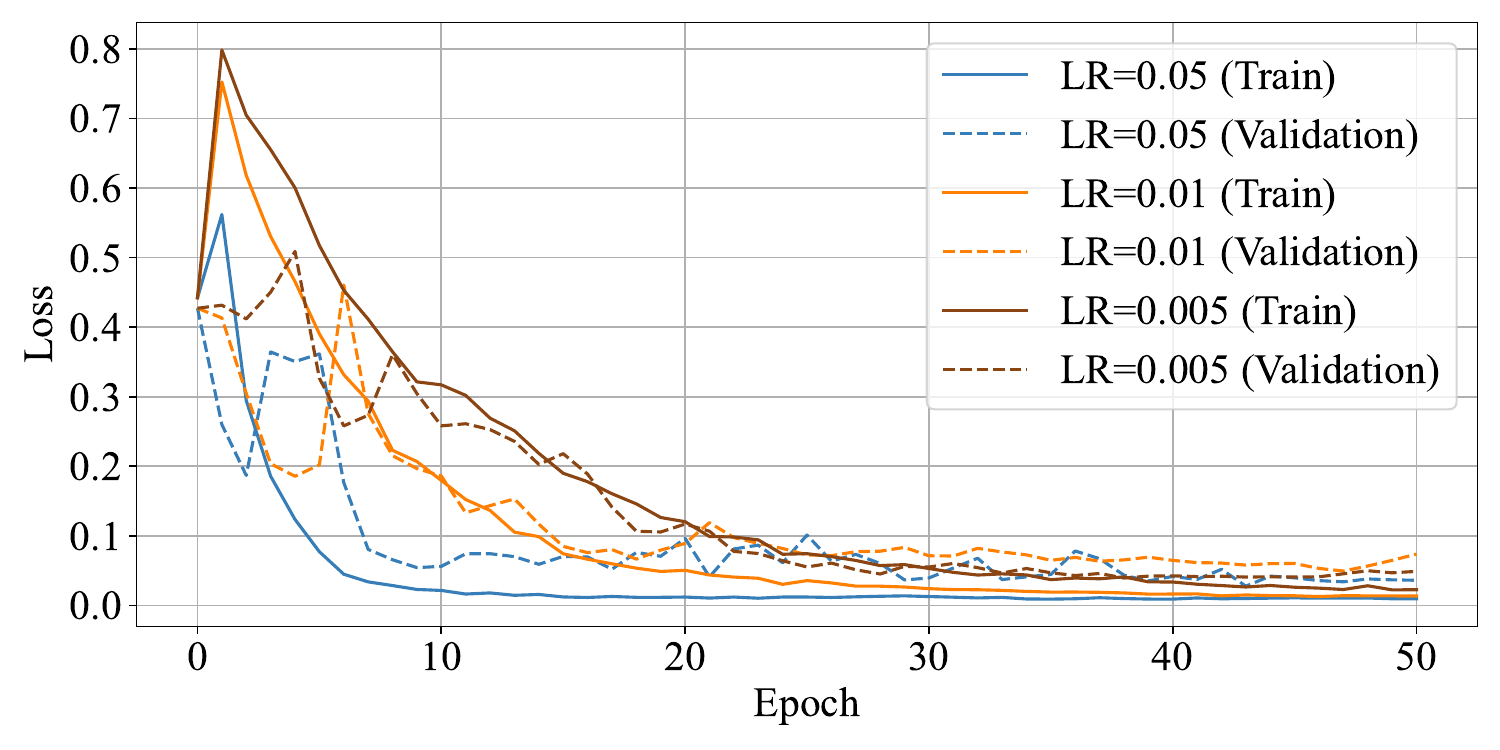}%
		\label{fig_7}}
	\hfill
	\subfloat[SGD]{%
		\includegraphics[width=0.32\textwidth]{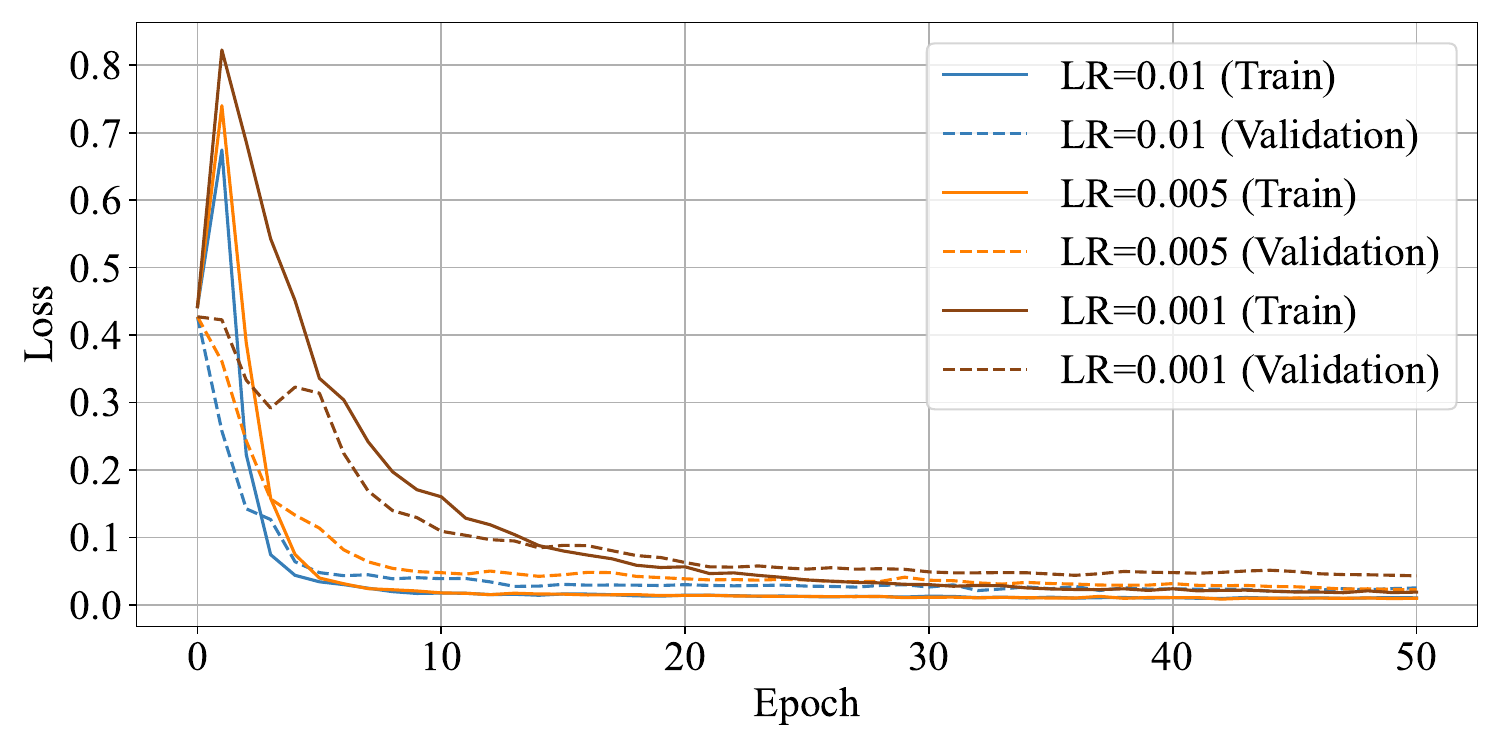}%
		\label{fig_8}}
	\hfill
	\subfloat[Shampoo]{%
		\includegraphics[width=0.32\textwidth]{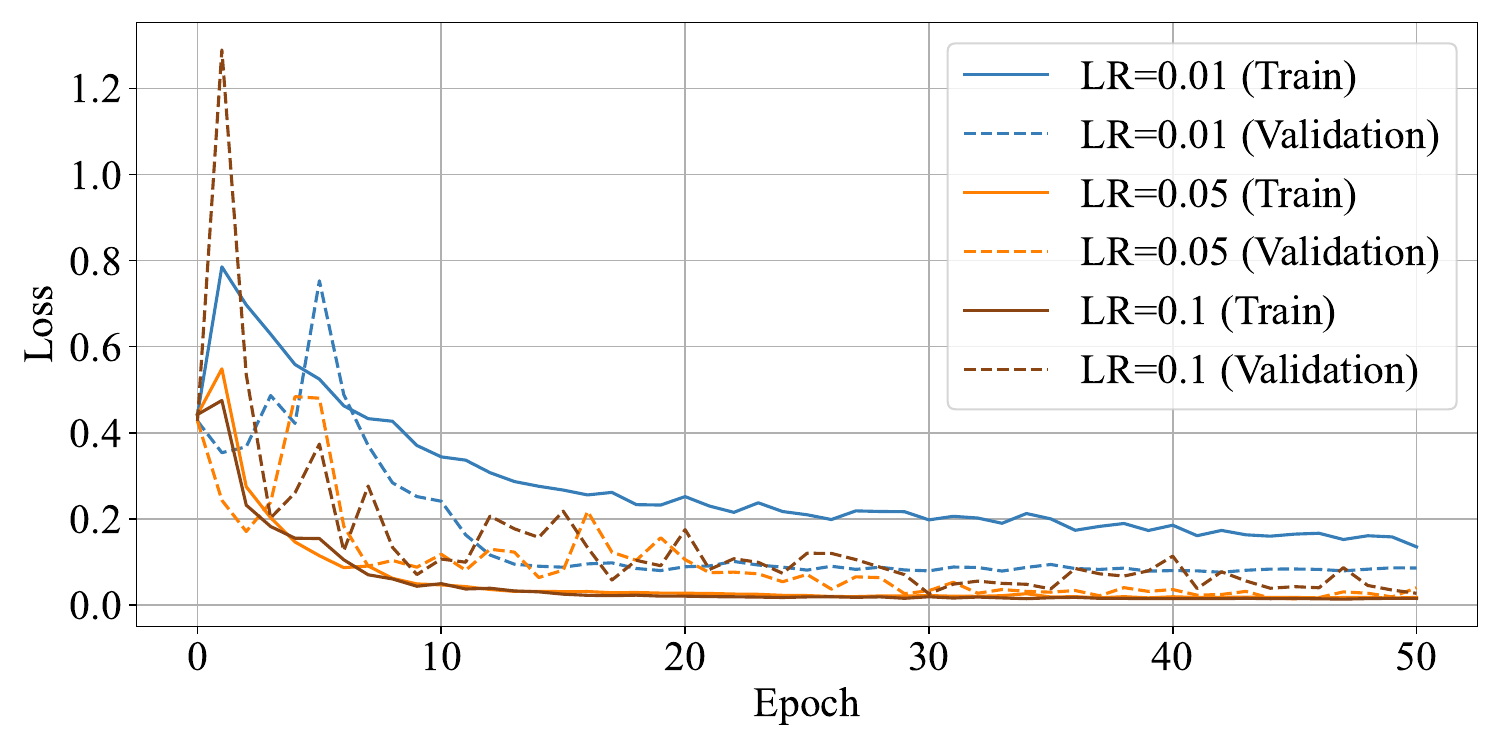}%
		\label{fig_9_plus}}
	
	\caption{Training and validation loss curves of six algorithms under different LR settings: 
		(a) Adam, (b) Diag-OCP, (c) AdaHessian, 
		(d) RAdam, (e) SGD, (f) Shampoo.}
	\label{fig:all_algorithms}
\end{figure*}

Figure \ref{fig:train_val_loss} compares the performance of six algorithms under their respective best-tuned parameter, with training and validation loss presented separately. To further illustrate the stability and performance of each algorithm, the training processes after 50 and 150 iterations are both shown.

\begin{figure*}[h]
	\centering
	\subfloat[Train loss at 50 iterations]{%
		\includegraphics[width=0.45\textwidth]{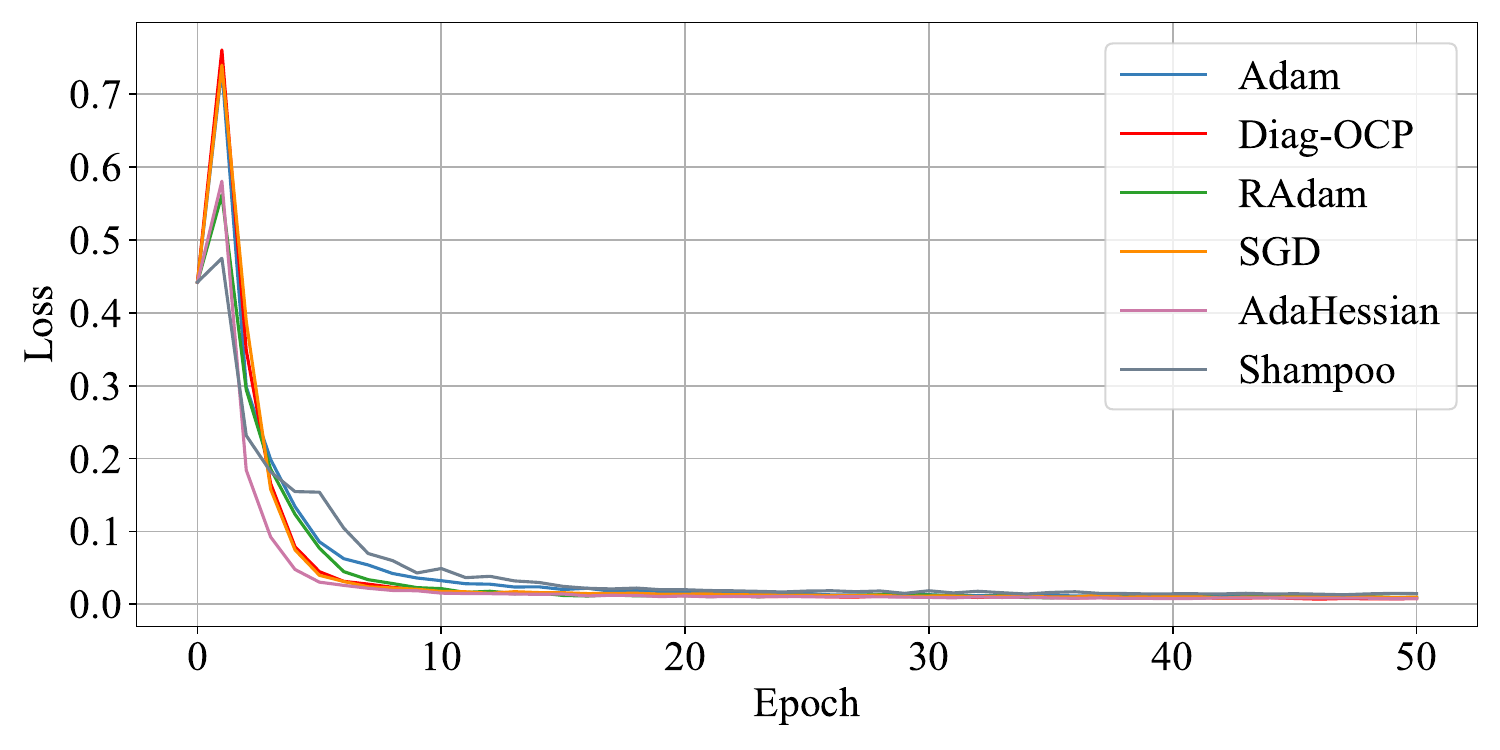}%
		\label{fig_9}}
	\hspace{0.02\textwidth}
	\subfloat[Validation loss at 50 iterations]{%
		\includegraphics[width=0.45\textwidth]{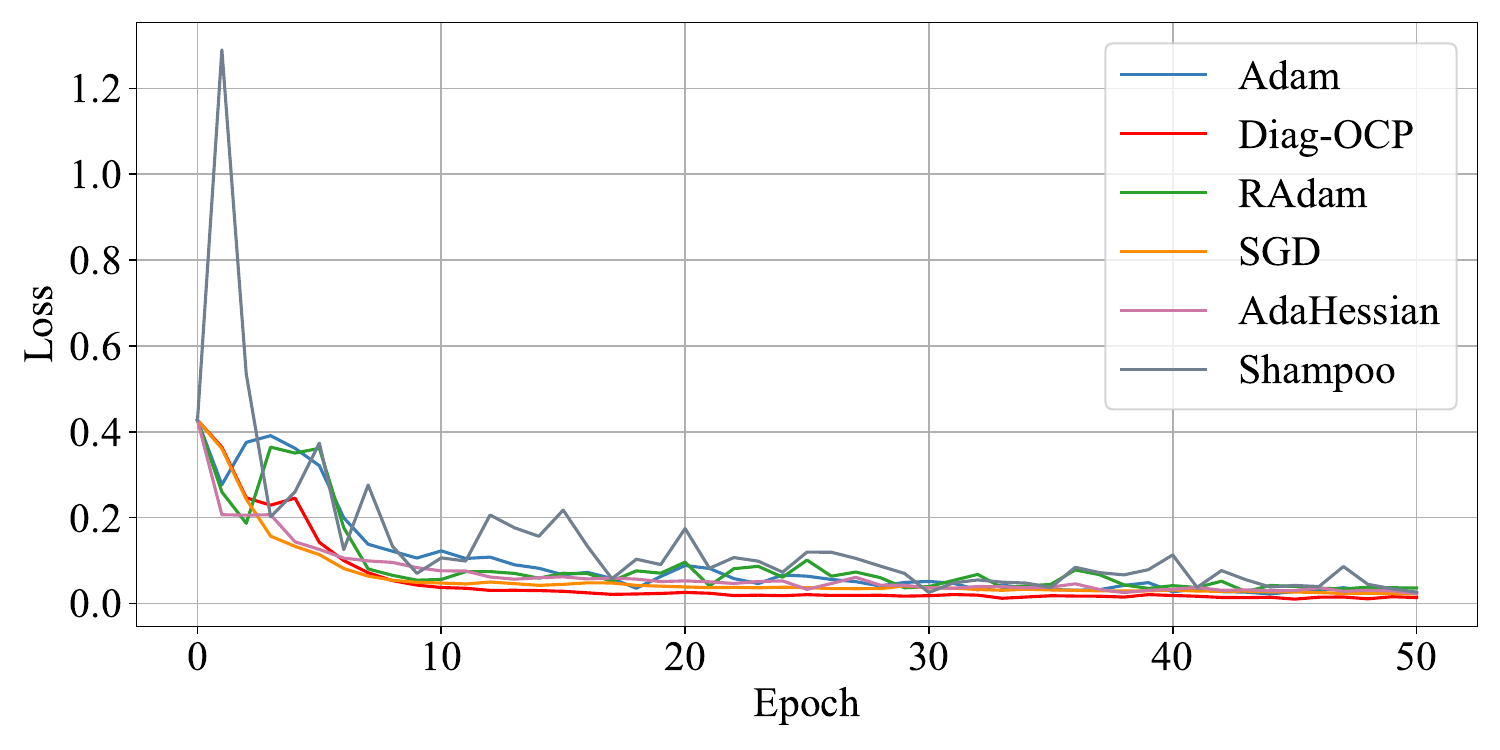}%
		\label{fig_10}}\\[1ex] 
	
	\subfloat[Train loss at 150 iterations]{%
		\includegraphics[width=0.45\textwidth]{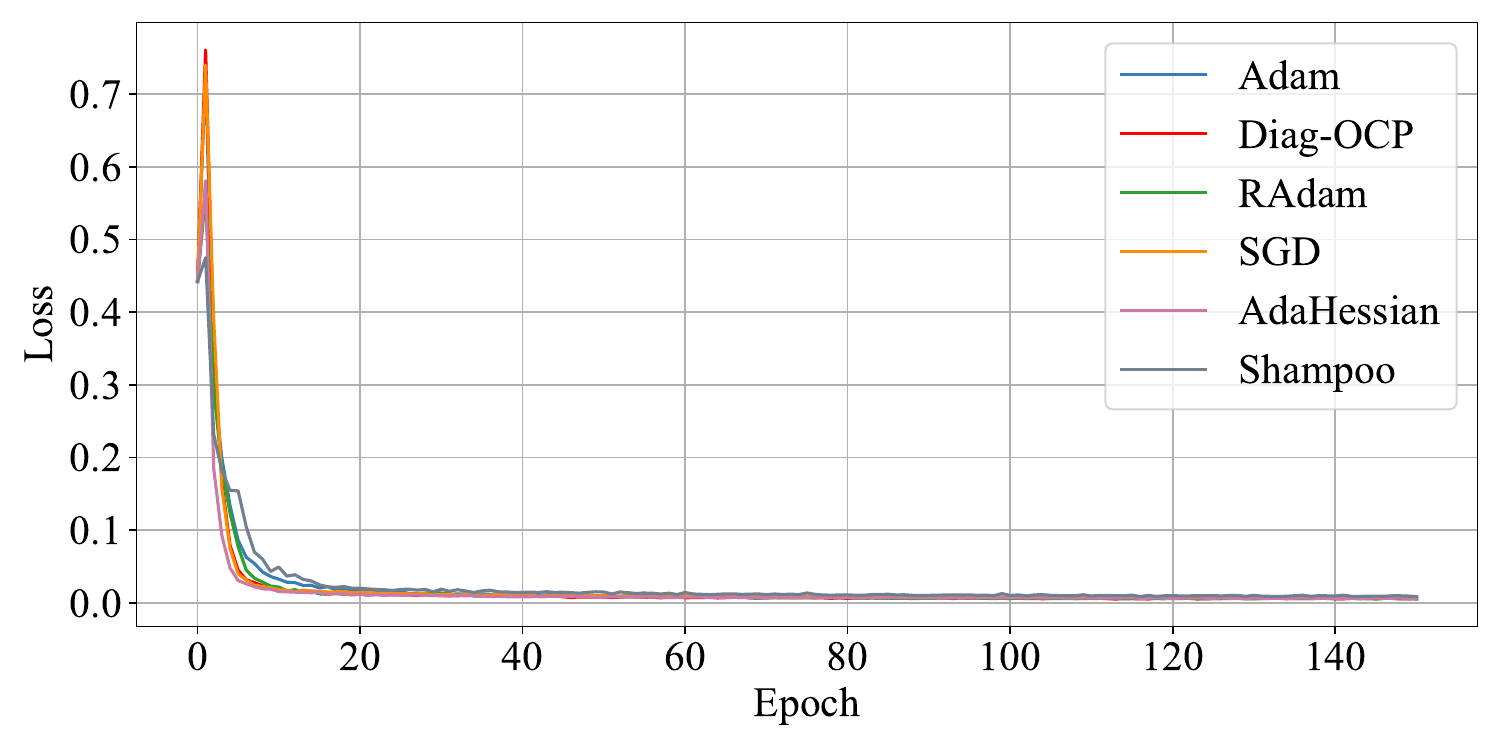}%
		\label{fig_13}}
	\hspace{0.02\textwidth}
	\subfloat[Validation loss at 150 iterations]{%
		\includegraphics[width=0.45\textwidth]{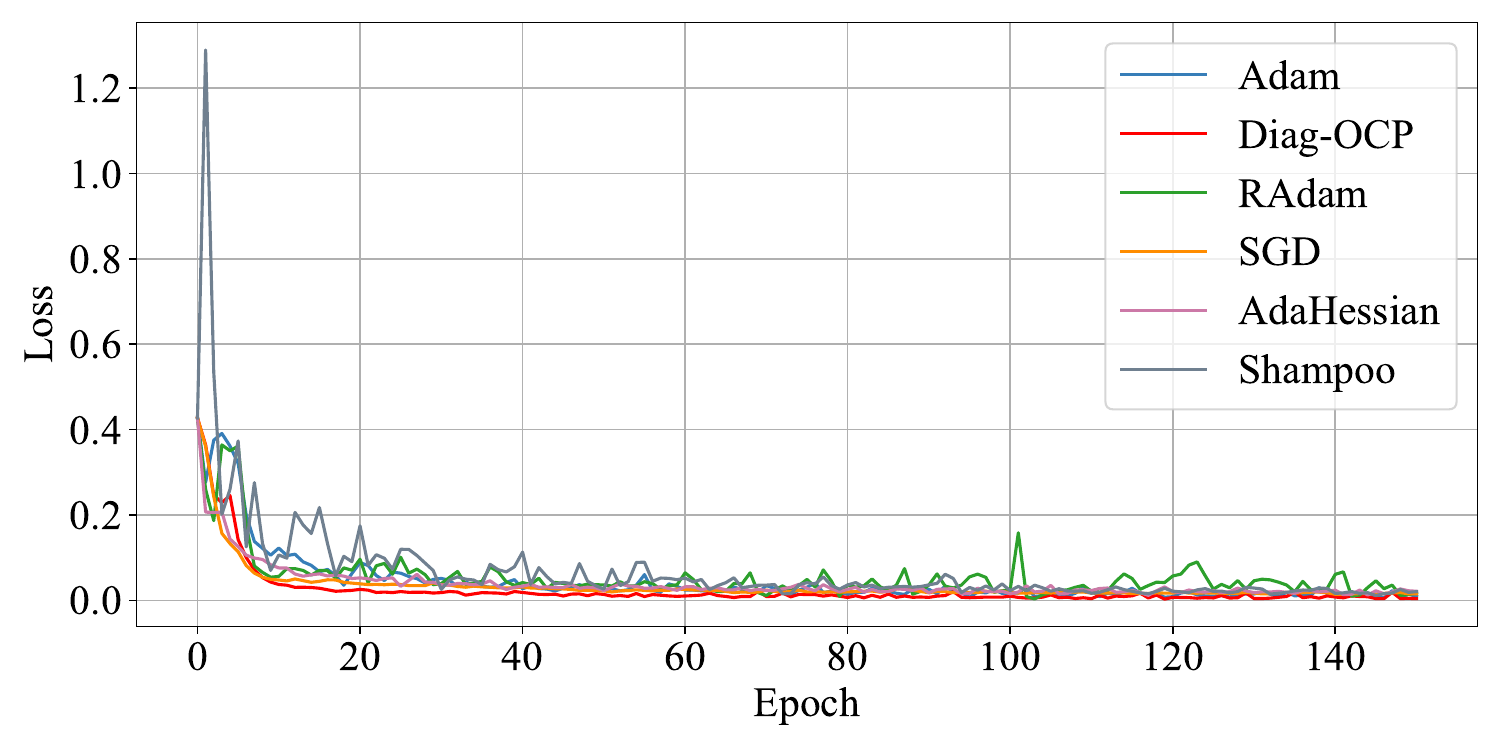}%
		\label{fig_14}}
	
	\caption{Training and validation loss comparisons at different iteration steps}
	\label{fig:train_val_loss}
\end{figure*}

At 50 iterations, Diag-OCP exhibits a rapid decrease in both training and validation loss, demonstrating strong generalization capability. By 150 iterations, Diag-OCP continues to decrease steadily, ultimately achieving the lowest training and validation loss. After 150 iterations, Diag-OCP achieves a validation loss of 0.0041, which is $57.1 \%$ lower than that of the second-best algorithm, Adam. Moreover, the minimum validation loss over the entire training process is 0.0029, representing a $12.1 \%$ improvement over the second-best algorithm, RAdam.

Experimental results indicate that Diag-OCP significantly outperforms the comparison algorithms in terms of parameter sensitivity, validation loss reduction rate, and generalization performance. Its robustness to learning rate selection reduces the difficulty of hyperparameter tuning. Furthermore, due to the high precision of its minimum validation loss, early stopping strategies can be effectively applied in practical scenarios. The training and validation loss curves both exhibit rapid declines during the initial iterations, with the validation loss decreasing smoothly throughout, further confirming the efficiency and stability of the proposed algorithm.

\section{Conclusion}
\label{Conclusion}
This study proposes an algorithm grounded in OCP that effectively leverages Hutchinson-estimated diagonal second-order information. Theoretically, we establish its convergence rate under standard assumptions. Empirically, the algorithm demonstrates favorable performance in visual localization tasks, achieving accelerated convergence, enhanced generalization capability, and robustness to step-size hyperparameter tuning. A key contribution of this work lies in the successful integration of OCP method into deep learning, validating its effectiveness and feasibility while providing valuable insights for cross-disciplinary collaboration between these fields. Future work will focus on verifying its generalization capacity across more diverse datasets and architectures.

\printcredits

\section*{Data availability}
The data that support the findings of this study are available from the following sources:
\begin{itemize}
	\item[]
	$\bullet$ The KITTI vision benchmark dataset~\citep{ref29}, which was used for the visual localization experiments in this study, is publicly available and can be accessed at its official website: \url{https://www.cvlibs.net/datasets/kitti/}.
	
	$\bullet$ The source code, algorithm implementations, and simulation results generated during this study are available from the corresponding author upon reasonable request.
\end{itemize}

\section*{Declaration of competing interest}
The authors declare that they have no known competing financial interests or personal relationships that could have appeared to influence the work reported in this paper.

\appendix
\section{The analysis of the convergence rate}
The analysis for the Diag-OCP adopts and extends the proof framework established for generalized Adam-type algorithms \citep{ref27}. This framework is particularly well-suited for analyzing adaptive optimization algorithms that utilize exponential moving averages of gradients and second-order moments. The main deviation and extension of our framework lies in how to handle the OCP-based update rule \eqref{eq16} and how to integrate the diagonal Hessian approximation $H_k$, which requires careful handling to establish boundedness and convergence under assumptions.
\subsection{Equivalence of the Algorithm}
\begin{lemma} \label{lemma1}
	The closed-form solution of the Algorithm \eqref{eq16}, when $\|I - M \hat{D}_{k}\| < 1$, is equivalent to:
	\begin{equation}
		x_{k+1} = x_{k} - (I - (I - M \hat{D}_{k})^{k+1}) \hat{D}_{k}^{-1} \hat{m}_k \label{aeq2}
	\end{equation}
\end{lemma}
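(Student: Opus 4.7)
The plan is a direct unrolling of the linear recursion followed by a geometric-series telescoping, exactly analogous to the scalar identity $\sum_{j=0}^{l} r^{j}=(1-r^{l+1})/(1-r)$. Set, for brevity, $A:=I-M\hat{D}_k$ and $b:=M\hat{m}_k$, so that the recursion in \eqref{eq16} reads $\phi_{0}(x_k)=b$ and $\phi_{l}(x_k)=b+A\,\phi_{l-1}(x_k)$. Note that both $M$ (chosen as $\alpha I$ in Algorithm \ref{alg1}) and $\hat{D}_k$ (diagonal by construction of the Hutchinson estimator) are diagonal, hence they commute; this commutativity is what will let me pull $M$ and $\hat{D}_k^{-1}$ past the polynomial in $A$ cleanly.

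First I would prove by induction on $l$ that $\phi_{l}(x_k)=\sum_{j=0}^{l}A^{j}\,b$. The base case $l=0$ is immediate, and the inductive step substitutes the hypothesis into the recursion and shifts the summation index. Second, I would invoke assumption \ref{ass:a5}, $\|A\|=\|I-M\hat{D}_k\|<1$, which guarantees that $I-A=M\hat{D}_k$ is invertible and that the identity
\begin{equation*}
(I-A)\sum_{j=0}^{l}A^{j}=I-A^{l+1}
\end{equation*}
can be multiplied on the left by $(I-A)^{-1}=(M\hat{D}_k)^{-1}=\hat{D}_k^{-1}M^{-1}$ to obtain a closed form for the partial sum.

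Combining these two steps gives
\begin{equation*}
\phi_{l}(x_k)=(M\hat{D}_k)^{-1}\bigl(I-(I-M\hat{D}_k)^{l+1}\bigr)M\hat{m}_k.
\end{equation*}
Using commutativity of $M$ and $\hat{D}_k$, the factor $M$ on the right can be commuted past the polynomial $(I-(I-M\hat{D}_k)^{l+1})$ and cancelled against $M^{-1}$, leaving
\begin{equation*}
\phi_{l}(x_k)=\bigl(I-(I-M\hat{D}_k)^{l+1}\bigr)\hat{D}_k^{-1}\hat{m}_k.
\end{equation*}
Setting $l=k$ and substituting into $x_{k+1}=x_k-\phi_k(x_k)$ yields exactly \eqref{aeq2}.

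The only real subtlety, and the step I would double-check rather than any calculation, is the commutation of $M$ with the matrix polynomial in $\hat{D}_k$: in full generality this would require $M$ and $\hat{D}_k$ to commute, but here it is immediate because both are diagonal (indeed $M=\alpha I$ in the implementation), so no further structural assumption is needed. The geometric-series step is the only analytical content, and it is underwritten by \ref{ass:a5}; everything else is algebraic bookkeeping.
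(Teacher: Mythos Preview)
Your proposal is correct and follows essentially the same route as the paper: introduce $A=I-M\hat{D}_k$, $b=M\hat{m}_k$, unroll the recursion to $\phi_l=\sum_{j=0}^{l}A^jb$, and close the geometric sum via $(I-A)^{-1}(I-A^{l+1})$. If anything you are more careful than the paper in flagging the commutativity needed to cancel $M$ against $M^{-1}$; the paper simply writes $(M\hat{D}_k)^{-1}M\hat{m}_k=\hat{D}_k^{-1}\hat{m}_k$ without comment.
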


\begin{proof}
	For Algorithm \eqref{eq16}, it is necessary to explicitly express $\phi_{k}(x_{k})$. Let:
	\begin{equation}
		\begin{aligned}
			&A = I - M \hat{D}_{k} \\
			&b = M \hat{m}_k ,
		\end{aligned}
	\end{equation}
	thus, the inner loop recursion is given by:
	\begin{equation}
		\phi_{i}(x_{k}) = b + A\phi_{i-1}(x_{k}).
	\end{equation}
	Expanding the first few terms yields:
	\begin{equation}
		\begin{aligned}
			\phi_{0}(x_{k}) &= b, \\
			\phi_{1}(x_{k}) &= b + Ab, \\
			\phi_{2}(x_{k}) &= b + A(b + Ab) = b + Ab + A^2b, \\
			&\;\;\vdots \\
			\phi_{l}(x_{k}) &= \sum_{i=0}^l A^i b,
		\end{aligned}
	\end{equation}
	therefore, the result of the inner loop is the partial sum of a matrix power series:
	\begin{equation}
		\phi_{l}(x_{k}) = \sum_{i=0}^l A^i b. \label{aeq3}
	\end{equation}
	When $\|A\| < 1$, Equation \eqref{aeq3} has the following identity:
	\begin{equation}
		\sum_{i=0}^l A^i = (I - A^{l+1})(I - A)^{-1}.
	\end{equation}
	Substituting the original problem variables, by letting $A = I - M \hat{D}_{k}$ and $b = M \hat{m}_k$, yields:
	\begin{equation}
		\begin{aligned}
			\phi_{l}(x_{k}) &= (I - A^{l+1})(I - A)^{-1}b \\
			&= (I - (I - M \hat{D}_{k})^{l+1}) \cdot (M \hat{D}_{k})^{-1} M \hat{m}_k \\
			&= (I - (I - M \hat{D}_{k})^{l+1}) \hat{D}_{k}^{-1} \hat{m}_k.
		\end{aligned}
	\end{equation}
	In the full iteration, $l = k$, and thus the original algorithm has the following update formula:
	\begin{equation}
		x_{k+1} = x_{k} - (I - (I - M \hat{D}_{k})^{k+1}) \hat{D}_{k}^{-1} \hat{m}_k .
	\end{equation}
	In summary, when $\|I - M \hat{D}_{k}\| < 1$, Algorithm \eqref{eq16} and \eqref{aeq2} are equivalent.
\end{proof}

\subsection{Construction of an Auxiliary Sequence}
To handle the momentum term $m_k$, define an auxiliary sequence $z_k$:
\begin{equation}
	z_k = x_k + \frac{\beta_1}{1 - \beta_1} (x_k - x_{k-1}) \quad k \geq 1 ,
\end{equation}
here, the initialization is set as $x_0 = x_1$, and thus $z_1 = x_1$.
This sequence encodes the momentum information into the current state, simplifying the analysis.

By definition:
\begin{equation}
	\begin{aligned}
		&z_{k+1} = x_{k+1} + \frac{\beta_1}{1 - \beta_1}(x_{k+1} - x_k) \\
		&z_k = x_k + \frac{\beta_1}{1 - \beta_1}(x_k - x_{k-1}) ,
	\end{aligned}
\end{equation}
substituting into the algorithm update $x_{k+1} = x_k - \Gamma_k \hat{m}_k$, where $\Gamma_k = (I - (I - M \hat{D}_k)^{k+1}) \hat{D}_k^{-1}$:
\begin{equation}
	\begin{aligned}
		z_{k+1} - z_k &= [ x_{k+1} + \frac{\beta_1}{1 - \beta_1}(x_{k+1} - x_k) ] \\
		& \quad - [ x_k + \frac{\beta_1}{1 - \beta_1}(x_k - x_{k-1}) ] \\
		&= (x_{k+1} - x_k) \\
		& \quad + \frac{\beta_1}{1 - \beta_1}[(x_{k+1} - x_k) - (x_k - x_{k-1})] \\
		&= -\Gamma_k \hat{m}_k + \frac{\beta_1}{1 - \beta_1} [ -\Gamma_k \hat{m}_k + \Gamma_{k-1} \hat{m}_{k-1}] \\
		&= -\frac{1}{1 - \beta_1} \Gamma_k \hat{m}_k + \frac{\beta_1}{1 - \beta_1} \Gamma_{k-1} \hat{m}_{k-1} \label{aeq4} .
	\end{aligned}
\end{equation}
At initialization, $m_0 = 0$, thus $\hat{m}_0 = 0$, and therefore $\Gamma_0 \hat{m}_0 = 0$. 

\subsection{Descent in Function Value}
Using \ref{ass:a1}, analyze the decrease of $f(z_k)$:
\begin{equation}
	f(z_{k+1}) \leq f(z_k) + \langle \nabla f(z_k), z_{k+1} - z_k \rangle + \frac{L}{2} \|z_{k+1} - z_k\|^2 .
\end{equation}
Take the expectation:
\begin{equation}
	\begin{aligned}
		\mathbb{E}[f(z_{k+1})] &\leq \mathbb{E}[f(z_k)] + \mathbb{E}[\langle \nabla f(z_k), z_{k+1} - z_k \rangle] \\
		& \quad + \frac{L}{2} \mathbb{E}[\|z_{k+1} - z_k\|^2] \label{aeq19} .
	\end{aligned}
\end{equation}
Substitute the result from Equation \eqref{aeq4} :
\begin{equation}
	\begin{aligned}
		&\mathbb{E}[\langle \nabla f(z_k), z_{k+1} - z_k \rangle] \\
		&= \mathbb{E} [\langle \nabla f(z_k), -\frac{1}{1-\beta_1} \Gamma_k \hat{m}_k + \frac{\beta_1}{1-\beta_1} \Gamma_{k-1} \hat{m}_{k-1} \rangle ] \\
		&= \mathbb{E}[ -\frac{1}{1 - \beta_{1}} \langle \nabla f(z_k) , \Gamma_k \hat{m}_k \rangle \\
		& \quad \quad + \frac{\beta_{1}}{1 - \beta_{1}} \langle \nabla f(z_k) , \Gamma_{k-1} \hat{m}_{k-1} \rangle]\label{aeq5} .
	\end{aligned}
\end{equation}
\subsection{Algebraic Decomposition of the Inner Product}
Expand $\hat{m}_k$ as follows:
\begin{equation}
	\begin{aligned}
		\hat{m}_k &= \frac{m_k}{1 - \beta_1^k} \\
		&= \frac{\beta_1 m_{k-1} + (1 - \beta_1) g_k}{1 - \beta_1^k} \\
		&= \frac{\beta_1 (1 - \beta_1^{k-1}) \hat{m}_{k-1} + (1 - \beta_1)(\nabla f(x_k) + \zeta_k)}{1 - \beta_1^k} \label{aeq6} .
	\end{aligned}
\end{equation}
For the term $\left\langle \nabla f(z_k) , \Gamma_k \hat{m}_k \right\rangle$ in Equation \eqref{aeq5}, substituting $\hat{m}_k$ yields:
\begin{equation}
	\begin{aligned}
		\langle \nabla f(z_k), \Gamma_k \hat{m}_k \rangle &= \frac{\beta_1 (1 - \beta_1^{k-1})}{1 - \beta_1^k} \langle \nabla f(z_k), \Gamma_k \hat{m}_{k-1} \rangle \\
		& \quad + \frac{1 - \beta_1}{1 - \beta_1^k} \langle \nabla f(z_k), \Gamma_k (\nabla f(x_k) + \zeta_k) \rangle \\
		&=\frac{\beta_1 (1 - \beta_1^{k-1})}{1 - \beta_1^k} \langle \nabla f(z_k), \Gamma_k \hat{m}_{k-1} \rangle \\
		& \quad + \frac{1 - \beta_1}{1 - \beta_1^k} (\langle \nabla f(z_k), \Gamma_k \nabla f(x_k)  \rangle \\
		& \quad + \langle \nabla f(z_k), \Gamma_k \zeta_k \rangle) \label{aeq7} .
	\end{aligned}
\end{equation}
The term $\langle \nabla f(z_k), \Gamma_k \nabla f(x_k) \rangle$ can further be decomposed as follows:
\begin{equation}
	\begin{aligned}
		\langle \nabla f(z_k), \Gamma_k \nabla f(x_k) \rangle &= \langle \nabla f(x_k), \Gamma_k \nabla f(x_k) \rangle \\
		& \quad + \langle \nabla f(z_k) - \nabla f(x_k), \Gamma_k \nabla f(x_k) \rangle .
	\end{aligned}
\end{equation}
Therefore, substituting Equation \eqref{aeq6} and Equation \eqref{aeq7} into Equation \eqref{aeq5} without expectation yields:
\begin{equation}
	\begin{aligned}
		&\langle \nabla f(z_k), z_{k+1} - z_k \rangle \\
		&=  -\frac{1}{1 - \beta_{1}} \left\langle \nabla f(z_k) , \Gamma_k \hat{m}_k \right\rangle \\
		& \quad + \frac{\beta_{1}}{1 - \beta_{1}} \left\langle \nabla f(z_k) , \Gamma_{k-1} \hat{m}_{k-1} \right\rangle \\
		&= -\frac{1}{1 - \beta_{1}} \big( \frac{\beta_1 (1 - \beta_1^{k-1})}{1 - \beta_1^k} \langle \nabla f(z_k), \Gamma_k \hat{m}_{k-1} \rangle \\
		& \quad + \frac{1 - \beta_1}{1 - \beta_1^k} ( \langle \nabla f(z_k), \Gamma_k \nabla f(x_k)  \rangle \\
		& \quad + \langle \nabla f(z_k), \Gamma_k \zeta_k \rangle ) \big) \\
		& \quad + \frac{\beta_{1}}{1 - \beta_{1}} \left\langle \nabla f(z_k) , \Gamma_{k-1} \hat{m}_{k-1} \right\rangle \\
		&= -\frac{\beta_{1} (1-\beta_{1}^{k-1})}{(1 - \beta_{1}) (1 - \beta_1^k)} \langle \nabla f(z_k), \Gamma_k \hat{m}_{k-1} \rangle \\
		& \quad - \frac{1}{1 - \beta_{1}^k} \left\langle \nabla f(x_k) , \Gamma_{k} \nabla f(x_k) \right\rangle \\
		& \quad - \frac{1}{1 - \beta_{1}^k} \left\langle \nabla f(z_k) - \nabla f(x_k), \Gamma_{k} \nabla f(x_k) \right\rangle \\
		& \quad - \frac{1}{1 - \beta_{1}^k} \left\langle \nabla f(z_k), \Gamma_{k} \zeta_k \right\rangle \\
		& \quad + \frac{\beta_{1}}{1 - \beta_{1}} \left\langle \nabla f(z_k) , \Gamma_{k-1} \hat{m}_{k-1} \right\rangle \label{aeq8} .
	\end{aligned}
\end{equation}

Extracting the main descent term $- \frac{1}{1 - \beta_{1}^k} \left\langle \nabla f(x_k), \Gamma_{k} \nabla f(x_k) \right\rangle$ from Equation \eqref{aeq8}, we define the remaining terms as the residual term $R_k$, yielding:
\begin{equation}
	\langle \nabla f(z_k), z_{k+1} - z_k \rangle = - \frac{1}{1 - \beta_{1}^k} \left\langle \nabla f(x_k) , \Gamma_{k} \nabla f(x_k) \right\rangle + R_{k} ,
\end{equation}
here, $R_k$ is given by:
\begin{equation}
	\begin{aligned}
		R_{k} &= -\frac{\beta_{1} (1-\beta_{1}^{k-1})}{(1 - \beta_{1}) (1 - \beta_1^k)} \langle \nabla f(z_k), \Gamma_k \hat{m}_{k-1} \rangle \\
		& \quad - \frac{1}{1 - \beta_{1}^k} \left\langle \nabla f(z_k) - \nabla f(x_k), \Gamma_{k} \nabla f(x_k) \right\rangle \\
		& \quad - \frac{1}{1 - \beta_{1}^k} \left\langle \nabla f(z_k), \Gamma_{k} \zeta_k \right\rangle \\
		& \quad + \frac{\beta_{1}}{1 - \beta_{1}} \left\langle \nabla f(z_k) , \Gamma_{k-1} \hat{m}_{k-1} \right\rangle .
	\end{aligned}
\end{equation}

Now, we begin to reorganize the residual terms.
First, combine these two terms to form $R_k^{(1)}$:
\begin{equation}
	\begin{aligned}
		R_k^{(1)} &= \frac{\beta_{1}}{1 - \beta_{1}} \left\langle \nabla f(z_k) , \Gamma_{k-1} \hat{m}_{k-1} \right\rangle \\
		& \quad - \frac{\beta_{1} (1-\beta_{1}^{k-1})}{(1 - \beta_{1}) (1 - \beta_1^k)} \langle \nabla f(z_k), \Gamma_k \hat{m}_{k-1} \rangle \\
		& = \frac{\beta_1}{1 - \beta_1} \big( \langle \nabla f(z_k), \Gamma_{k-1} \hat{m}_{k-1} \rangle \\
		& \quad - \frac{1 - \beta_1^{k-1}}{1 - \beta_1^k} \langle \nabla f(z_k), \Gamma_k \hat{m}_{k-1} \rangle \big) \\
		& = \frac{\beta_1}{1 - \beta_1} \langle \nabla f(z_k), ( \Gamma_{k-1} - \frac{1 - \beta_1^{k-1}}{1 - \beta_1^k} \Gamma_k ) \hat{m}_{k-1} \rangle .
	\end{aligned}
\end{equation}
Then, define $R_k^{(2)}$ as:
\begin{equation}
	R_{k}^{(2)} = - \frac{1}{1 - \beta_{1}^k} \left\langle \nabla f(z_k), \Gamma_{k} \zeta_k \right\rangle .
\end{equation}
Finally, $R_k^{(3)}$ is:
\begin{equation}
	R_{k}^{(3)} = - \frac{1}{1 - \beta_{1}^k} \left\langle \nabla f(z_k) - \nabla f(x_k), \Gamma_{k} \nabla f(x_k) \right\rangle .
\end{equation}

\subsection{Bounding the Residual Terms}
\subsubsection{Bound for $R_k^{(1)}$}
First, by applying the Cauchy–Schwarz inequality and \ref{ass:a2}:
\begin{equation}
	|R_k^{(1)}| \leq \frac{\beta_1}{1 - \beta_1} \|\nabla f(z_k)\| \cdot \|\Gamma_{k-1} - \frac{1 - \beta_1^{k-1}}{1 - \beta_1^k} \Gamma_k\| \cdot \|\hat{m}_{k-1}\|.
\end{equation}
By \ref{ass:a2}, we have $\|\nabla f(z_k)\| \leq \mathcal{H}$, and also $\|\hat{m}_{k-1}\| = \frac{\|m_{k-1}\|}{1 - \beta_1^{k-1}} \leq \frac{\mathcal{H}_g}{1 - \beta_1^{k-1}}$
(where $\|m_{k-1}\| \leq \mathcal{H}_g$ can be derived from \ref{ass:a2}’s condition $\| g(x_k) \| \leq \mathcal{H}_g$ by recursion starting from $m_0$), therefore:
\begin{equation}
	|R_k^{(1)}| \leq \frac{\beta_1}{1 - \beta_1} \cdot \mathcal{H} \cdot \|\Gamma_{k-1} - \frac{1 - \beta_1^{k-1}}{1 - \beta_1^k} \Gamma_k\| \cdot \frac{\mathcal{H}_g}{1 - \beta_1^{k-1}}
\end{equation}

\begin{lemma} \label{lemma2}
	By \ref{ass:a4}: $\mu I \preceq H_k \preceq G_d I$, it follows that $\|\hat{D}_k\| \leq G_d$, $\|\hat{D}_k^{-1}\| \leq \mu^{-1}$.
\end{lemma}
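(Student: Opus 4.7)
\textbf{Proof proposal for Lemma \ref{lemma2}.}

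The plan is to transfer the uniform bound $\mu I \preceq H_k \preceq G_d I$ from the instantaneous Hutchinson estimate to the exponentially averaged, bias-corrected quantity $\hat{D}_k$, and then read off the operator-norm bounds. Since $H_k$ is a diagonal matrix (it stores the Hutchinson estimate of the Hessian diagonal), both $D_k$ and $\hat{D}_k$ inherit the diagonal structure from the recursion $D_k = \beta_2 D_{k-1} + (1-\beta_2)H_k$ with initialization $D_0 = 0$. Consequently, $\hat{D}_k$ is symmetric and its operator norm coincides with the largest absolute eigenvalue, which is the quantity I need to control.

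First, I would prove by induction on $k \geq 1$ that
\begin{equation}
    \mu(1-\beta_2^k)\, I \;\preceq\; D_k \;\preceq\; G_d(1-\beta_2^k)\, I.
\end{equation}
For the base case $k=1$, $D_1 = (1-\beta_2)H_1$, and the bound $\mu I \preceq H_1 \preceq G_d I$ from \ref{ass:a4} gives the claim immediately. For the inductive step, I substitute the recursion and use both the inductive hypothesis and the bound on $H_{k+1}$: the coefficient arithmetic $\beta_2(1-\beta_2^k) + (1-\beta_2) = 1-\beta_2^{k+1}$ produces the required factor, since $\beta_2 \in [0,1)$ keeps all weights nonnegative so positive semidefinite inequalities are preserved.

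Next, I would divide through by the positive scalar $1-\beta_2^k$, which by the definition \eqref{eq17} of the bias correction yields $\mu I \preceq \hat{D}_k \preceq G_d I$. The upper bound directly gives $\|\hat{D}_k\| \leq G_d$. The lower bound ensures $\hat{D}_k$ is positive definite (in particular invertible), with smallest eigenvalue at least $\mu > 0$; hence $\hat{D}_k^{-1} \preceq \mu^{-1} I$, and therefore $\|\hat{D}_k^{-1}\| \leq \mu^{-1}$.

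The only subtlety I anticipate is the handling of $k=1$, where the initialization $D_0 = 0$ does not itself satisfy the lower bound $\mu I \preceq D_0$; the bias-correction factor $1-\beta_2^k$ is precisely what absorbs this effect, so I would treat $k=1$ explicitly as the base case rather than trying to include $k=0$. Beyond that, the argument is a direct convex-combination bookkeeping of positive semidefinite matrices, so I do not foresee any genuine obstacle.
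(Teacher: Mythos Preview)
Your proposal is correct and follows essentially the same approach as the paper: establish $\mu(1-\beta_2^k)I \preceq D_k \preceq G_d(1-\beta_2^k)I$, divide by the bias-correction factor $1-\beta_2^k$, and read off the norm bounds. The only cosmetic difference is that the paper unrolls the recursion into the closed form $D_k = (1-\beta_2)\sum_{j=1}^k \beta_2^{k-j} H_j$ and uses the geometric-sum identity directly, whereas you prove the same sandwich by induction; both arguments are equivalent and equally elementary.
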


\begin{proof}
	By \ref{ass:a4}, it follows directly that:
	\begin{equation}
		\mu \leq [H_k]_{ii} \leq G_d.
	\end{equation}
	The update rule for $D_k$ is the exponential moving average:
	\begin{equation}
		D_k = \beta_2 D_{k-1} + (1 - \beta_2) H_k.
	\end{equation}
	With the initialization $D_0 = 0$, therefore:
	\begin{equation}
		D_k = (1 - \beta_2) \sum_{j=1}^k \beta_2^{k-j} H_j.
	\end{equation}
	Its diagonal elements are:
	\begin{equation}
		[D_k]_{ii} = (1 - \beta_2) \sum_{j=1}^k \beta_2^{k-j} [H_j]_{ii}.
	\end{equation}
	Since $[H_j]_{ii} \in [\mu, G_d]$, and $(1 - \beta_2) \cdot \sum_{j=1}^k \beta_2^{k-j} = 1 - \beta_2^k$, it follows that:
	\begin{equation}
		\mu (1 - \beta_2^k) \leq [D_k]_{ii} \leq G_d (1 - \beta_2^k).
	\end{equation}
	Dividing both sides by $1 - \beta_2^k$ (bias correction):
	\begin{equation}
		\mu \leq \frac{[D_k]_{ii}}{1 - \beta_2^k} = [\hat{D}_k]_{ii} \leq G_d.
	\end{equation}
	Final conclusion:
	\begin{equation}
		\mu I \preceq \hat{D}_k \preceq G_d I.
	\end{equation}
	Therefore, $\|\hat{D}_k\| \leq G_d$, $\|\hat{D}_k^{-1}\| \leq \mu^{-1}$.
\end{proof}

\begin{lemma} \label{lemma3}
	Under the conditions of Lemma \ref{lemma2}, if $\|I - M \hat{D}_k\| < 1$ holds, then it follows that $0 < M \leq \frac{\mu}{2G_d^2}$. Moreover, $\|I - M\hat{D}_k\|_2 \leq \rho < 1$ also holds, where $\rho = 1 - M\mu$.
\end{lemma}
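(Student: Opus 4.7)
The plan is to reduce the operator-norm hypothesis $\|I - M\hat{D}_k\| < 1$ to a collection of pointwise scalar inequalities by exploiting the diagonal structure of $\hat{D}_k$ (from Lemma \ref{lemma2}) together with the form $M = \alpha I$ fixed at initialization. Writing $I - M\hat{D}_k = \mathrm{diag}(1 - M[\hat{D}_k]_{ii})$, the spectral norm equals $\max_i |1 - M[\hat{D}_k]_{ii}|$, so the hypothesis is equivalent to $|1 - M[\hat{D}_k]_{ii}| < 1$ for every $i$, i.e. $0 < M[\hat{D}_k]_{ii} < 2$. Since $[\hat{D}_k]_{ii} \geq \mu > 0$ by Lemma \ref{lemma2}, the lower bound $M > 0$ follows immediately.

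For the upper bound $M \leq \mu/(2G_d^2)$, I would sharpen the bare stability hypothesis with the requirement (dictated by the downstream convergence analysis) that the contraction factor take the specific form $\rho = 1 - M\mu$. Enforcing $|1 - M[\hat{D}_k]_{ii}| \leq 1 - M\mu$ uniformly splits into two cases: when $M[\hat{D}_k]_{ii} \leq 1$, the inequality reduces to $[\hat{D}_k]_{ii} \geq \mu$, which already holds by Lemma \ref{lemma2}; when $M[\hat{D}_k]_{ii} > 1$, it becomes $M([\hat{D}_k]_{ii} + \mu) \leq 2$, whose worst-case instance at $[\hat{D}_k]_{ii} = G_d$ gives $M(G_d + \mu) \leq 2$. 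The clean sufficient constraint $M \leq \mu/(2G_d^2)$ resolves both cases at once: it forces $M[\hat{D}_k]_{ii} \leq MG_d \leq \mu/(2G_d) \leq 1/2 < 1$, so only the first case ever arises and the contraction bound is automatic.

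The second claim then follows directly. Under $M \leq \mu/(2G_d^2)$, each diagonal entry $1 - M[\hat{D}_k]_{ii}$ lies in the interval $[1 - MG_d,\, 1 - M\mu] \subseteq (1/2,\, 1 - M\mu]$, so $I - M\hat{D}_k$ is positive definite and its spectral norm equals its largest diagonal entry, namely $\max_i(1 - M[\hat{D}_k]_{ii}) = 1 - M\min_i[\hat{D}_k]_{ii} \leq 1 - M\mu$. Setting $\rho = 1 - M\mu$ and using $M\mu > 0$ yields $\rho < 1$, giving $\|I - M\hat{D}_k\|_2 \leq \rho < 1$ as required.

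The main obstacle is the passage from the bare stability hypothesis $\|I - M\hat{D}_k\| < 1$, which on its own only forces $0 < M < 2/G_d$, to the strictly sharper bound $M \leq \mu/(2G_d^2)$ asserted in the lemma. My plan handles this by coupling the hypothesis with the target contraction rate $\rho = 1 - M\mu$ that the subsequent residual-term bounds rely on, and showing that $\mu/(2G_d^2)$ is the cleanest explicit cap certifying this rate uniformly over $[\hat{D}_k]_{ii} \in [\mu, G_d]$; making this coupling rigorous without circularity is the delicate step of the argument.
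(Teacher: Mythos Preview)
Your proposal is correct and follows essentially the same route as the paper: both reduce the spectral-norm condition to the scalar eigenvalue bounds $1 - MG_d \leq 1 - M\lambda_i(\hat{D}_k) \leq 1 - M\mu$, obtain $0 < M < 2/G_d$ from the bare hypothesis, then adopt the tightened sufficient condition $M \leq \mu/(2G_d^2)$ to force $1 - MG_d \geq 1/2 > 0$ and conclude $\|I - M\hat{D}_k\|_2 = 1 - M\mu = \rho$. You are right to flag that the implication as literally stated (from $\|I - M\hat{D}_k\| < 1$ to $M \leq \mu/(2G_d^2)$) does not hold; the paper simply writes ``the condition is further tightened'' at that step without pretending it is a logical consequence, so your concern about circularity is well placed but matches how the paper itself handles the passage.
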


\begin{proof}
	By Lemma \ref{lemma2}, $\mu I \preceq \hat{D}_k \preceq G_d I$, and the eigenvalues of $\hat{D}_k$ satisfy:
	\begin{equation}
		\mu \leq \lambda_i(\hat{D}_k) \leq G_d.
	\end{equation}
	Therefore:
	\begin{equation}
		1 - MG_d \leq 1 - M\lambda_i(\hat{D}_k) \leq 1 - M\mu.
	\end{equation}
	Thus:
	\begin{equation}
		\|I - M\hat{D}_k\|_2 \leq \max(|1 - MG_d|, |1 - M\mu|) \label{aeq9}.
	\end{equation}
	To ensure that $\|I - M \hat{D}_k\|_2 < 1$, it is required that:
	\begin{equation}
		-1 < 1 - MG_d \leq 1 - M\mu < 1.
	\end{equation}
	It follows that:
	\begin{equation}
		\begin{aligned}
			&0 < M < \frac{2}{G_d}  \\
			&0 < M < \frac{2}{\mu},
		\end{aligned}
	\end{equation}
	namely:
	\begin{equation}
		0 < M < \frac{2}{G_d}.
	\end{equation}
	Since $\mu \leq G_{d}$, the condition is further tightened to:
	\begin{equation}
		0 < M \leq \frac{\mu}{G_{d}^2} \leq \frac{1}{G_{d}} < \frac{2}{G_{d}}.
	\end{equation}
	This condition can be further tightened to:
	\begin{equation}
		0 < M \leq \frac{\mu}{2G_{d}^2} \label{aeq10}.
	\end{equation}
	Based on Equation \eqref{aeq10}, it can be derived that $1 - MG_d > 0$ and $1 - M\mu > 0$, which allows the removal of the absolute value signs.
	
	According to the assumption $M \leq \frac{\mu}{2G_d^2}$ and $\mu \leq G_d$, it follows that:
	\begin{equation}
		M \leq \frac{\mu}{2G_{d}^2} \leq \frac{1}{2G_{d}},
	\end{equation}
	therefore:
	\begin{equation}
		MG_d \leq \frac{1}{2} \implies 1 - MG_d \geq \frac{1}{2} > 0 \label{aeq11}.
	\end{equation}
	Similarly, it can be shown that:
	\begin{equation}
		M\mu \leq \frac{\mu^2}{2G_d^2} \leq \frac{1}{2}.
	\end{equation}
	The final conclusion is:
	\begin{equation}
		1 - M\mu \geq \frac{1}{2} > 0 \label{aeq12}.
	\end{equation}
	From Equations \eqref{aeq11}, \eqref{aeq12}, and \eqref{aeq9}, it follows that:
	\begin{equation}
		\|I - M\hat{D}_k\|_2 \leq \rho < 1, \quad \rho = 1 - M\mu.
	\end{equation}
\end{proof}

Now, consider the expression for $\Gamma_k$: $\Gamma_k = (I - (I - M\hat{D}_k)^{k+1})\hat{D}_k^{-1}$. By Lemma \ref{lemma2} and Lemma \ref{lemma3}, we have $\|\hat{D}_k\| \leq G_d$, $\|\hat{D}_k^{-1}\| \leq \mu^{-1}$, and $\|I - M\hat{D}_k\| \leq \rho < 1$. Therefore:
\begin{equation}
	\|(I - M\hat{D}_k)^{k+1}\| \leq \rho^{k+1} \label{aeq15}.
\end{equation}
Hence:
\begin{equation}
	\begin{aligned}
		\|\Gamma_k\| &\leq \|I - (I - M\hat{D}_k)^{k+1}\| \cdot \|\hat{D}_k^{-1}\| \\
		&\leq (\|I\| + \| (I - M\hat{D}_{k})^{k+1} \|) \cdot \|\hat{D}_k^{-1}\| \\
		&\leq (1 + \rho^{k+1}) \mu^{-1} \label{aeq18}.
	\end{aligned}
\end{equation}
Similarly, $\| \Gamma_{k-1} \| \leq (1 + \rho^k) \mu^{-1}$. Moreover, since $\beta_1 < 1$, when $k$ is sufficiently large, $\frac{1 - \beta_1^{k-1}}{1 - \beta_1^k} \approx 1$. Therefore, the continuity between $\Gamma_k$ and $\Gamma_{k-1}$ can also be exploited, therefore:
\begin{equation}
	\begin{aligned}
		\| \Gamma_{k-1} - \frac{1 - \beta_1^{k-1}}{1 - \beta_1^k} \Gamma_k \| &\leq \| \Gamma_{k-1} \| + \frac{1 - \beta_1^{k-1}}{1 - \beta_1^k} \| \Gamma_k \| \\
		&\leq (1 + \rho^k) \mu^{-1} + (1 + \rho^{k+1}) \mu^{-1} \\
		&\leq 2(1 + \rho^k) \mu^{-1}.
	\end{aligned}
\end{equation}
Thus:
\begin{equation}
	|R_k^{(1)}| \leq \frac{\beta_1}{1 - \beta_1} \cdot \mathcal{H} \cdot 2(1 + \rho^k) \mu^{-1} \cdot \frac{\mathcal{H}_g}{1 - \beta_1^{k-1}}.
\end{equation}
Since $\rho < 1$, $\rho^k$ decays exponentially, and $1 - \beta_1^{k-1} \geq 1 - \beta_1$ (for $k \geq 2$), it follows that:
\begin{equation}
	\mathbb{E}[|R_k^{(1)}|] \leq \frac{\beta_1}{1 - \beta_1} \cdot \mathcal{H} \cdot 2(1 + \rho^k) \mu^{-1} \cdot \frac{\mathcal{H}_g}{1 - \beta_1} = \mathcal{O}(\beta_1 \cdot (1 + \rho^k)).
\end{equation} 
The factor $1 + \rho^k$ can be interpreted as follows: when $k$ is small, $\rho^k$ is not negligible, so the residual term $R_k^{(1)}$ is significantly influenced by the momentum history and the dynamic changes of the weight matrix; as $k$ becomes large, $\rho^k \to 0$, and the residual term converges to a steady-state value of $\mathcal{O}(\beta_1)$.

\subsubsection{Bound for $R_k^{(2)}$}
Reanalyzing the noise coupling term:
\begin{equation}
	R_k^{(2)} = - \frac{1}{1 - \beta_{1}^k} \langle \nabla f(z_k), \Gamma_k \zeta_k\rangle.
\end{equation}
By the Cauchy–Schwarz inequality, it holds that:
\begin{equation}
	|R_k^{(2)}| \leq \frac{1}{1 - \beta_{1}^k} \cdot \|\nabla f(z_k)\| \cdot \|\Gamma_k\| \cdot \|\zeta_k\|,
\end{equation}
hence:
\begin{equation}
	\mathbb{E}[|R_k^{(2)}|] \leq \frac{1}{1 - \beta_{1}^k} \mathbb{E}[\|\nabla f(z_k)\| \cdot \|\Gamma_k\| \cdot \|\zeta_k\|] \label{aeq13}.
\end{equation}
From Equation \eqref{aeq18}, it can be deduced that:
\begin{equation}
	\|\Gamma_k\| \leq (1 + \rho^{k+1}) \mu^{-1}.
\end{equation}
Controlling the conditional expectation of $\| \Gamma_k \|^2$. Note that the above bound is deterministic and does not depend on the noise (since in Lemma \ref{lemma3}, $\rho$ is a constant and the step size $M$ is fixed). Therefore, it holds that:
\begin{equation}
	\| \Gamma_k \|^2 \leq \left( (1 + \rho^{k+1}) \mu^{-1} \right)^2.
\end{equation}
Since the right-hand side is deterministic, the conditional expectation satisfies:
\begin{equation}
	\mathbb{E}[ \| \Gamma_k \|^2 | \mathcal{F}_{k-1} ] \leq  (1 + \rho^{k+1})^2 \mu^{-2}.
\end{equation}
Since $\rho < 1$, it follows that $(1 + \rho^{k+1})^2 \leq 4$ (because when $k=0$, $\rho^1 = \rho \leq 1$, thus the maximum is $(1 + 1)^2 = 4$):
\begin{equation}
	\mathbb{E}[ \| \Gamma_k \|^2 | \mathcal{F}_{k-1} ] \leq \frac{4}{\mu^2}.
\end{equation}
For Equation \eqref{aeq13}, by \ref{ass:a2}, the gradient norm is bounded: $\|\nabla f(z_k)\| \leq \mathcal{H}$. Therefore:
\begin{equation}
	\mathbb{E}\left[\|\nabla f(z_k)\|\cdot \|\Gamma_k\|\cdot \|\zeta_k\|\right] \leq \mathcal{H}\cdot \mathbb{E}\left[\|\Gamma_k\|\cdot \|\zeta_k\|\right] \label{aeq14}.
\end{equation}
Applying the Cauchy–Schwarz inequality again in expectation (note: the expectation of the product of two random variables satisfies):
\begin{equation}
	\mathbb{E}[|XY|] \leq \sqrt{\mathbb{E}[X^2] \cdot \mathbb{E}[Y^2]}.
\end{equation}
Then, in Equation \eqref{aeq14}:
\begin{equation}
	\mathbb{E} \left[ \|\Gamma_k\| \cdot \|\zeta_k\| \right] \leq \sqrt{\mathbb{E} \left[ \|\Gamma_k\|^2 \right] \cdot \mathbb{E} \left[ \|\zeta_k\|^2 \right]}.
\end{equation}
By \ref{ass:a2}, the variance of the gradient noise is bounded: $\mathbb{E} [\|\zeta_k\|^2] \leq \sigma_g^2$. Therefore:
\begin{equation}
	\mathbb{E} \left[ \|\zeta_k\|^2 \right] \leq \sigma_g^2.
\end{equation}
At the same time, by the tower property of conditional expectation:
\begin{equation}
	\mathbb{E} \left[ \| \Gamma_k \|^2 \right] = \mathbb{E} \left[ \mathbb{E} \left[ \| \Gamma_k \|^2 |\mathcal{F}_{k-1} \right] \right] \leq \frac{4}{\mu^2}.
\end{equation}
Thus:
\begin{equation}
	\begin{aligned}
		\mathbb{E}\left[\|\nabla f(z_k)\|\cdot \|\Gamma_k\|\cdot \|\zeta_k\|\right] &\leq \mathcal{H}\cdot \mathbb{E}\left[\|\Gamma_k\|\cdot \|\zeta_k\|\right] \\
		&\leq \mathcal{H}\cdot \sqrt{\mathbb{E} \left[ \|\Gamma_k\|^2 \right] \cdot \mathbb{E} \left[ \|\zeta_k\|^2 \right]} \\
		&\leq \mathcal{H} \cdot \sqrt{\frac{4}{\mu^2} \cdot \sigma_g^2} \\
		&\leq \frac{2 \sigma_g \mathcal{H}}{\mu}.
	\end{aligned}
\end{equation}
In summary:
\begin{equation}
	\mathbb{E}[|R_k^{(2)}|] \leq \frac{1}{1 - \beta_{1}^k} \cdot \frac{2 \sigma_g \mathcal{H}}{\mu}.
\end{equation}
Since $\beta_1 < 1$, when $k \geq 1$, $1 - \beta_1^k \geq 1 - \beta_1$. Therefore:
\begin{equation}
	\mathbb{E}[|R_k^{(2)}|] \leq \frac{1}{1 - \beta_1} \cdot \frac{2 \sigma_g \mathcal{H}}{\mu} = \mathcal{O}(1).
\end{equation}

\subsubsection{Bound for $R_k^{(3)}$}
\begin{equation}
	|R_k^{(3)}| \leq \frac{1}{1 - \beta_1^k} \|\nabla f(z_k) - \nabla f(x_k)\| \cdot \|\Gamma_k\| \cdot \|\nabla f(x_k)\|.
\end{equation}
By the Lipschitz continuity of the gradient:
\begin{equation}
	\|\nabla f(z_k) - \nabla f(x_k)\| \leq L \|z_k - x_k\|.
\end{equation}
By the definition of the auxiliary sequence:
\begin{equation}
	z_k - x_k = \frac{\beta_1}{1 - \beta_1} (x_k - x_{k-1}) = -\frac{\beta_1}{1 - \beta_1} \Gamma_{k-1} \hat{m}_{k-1}.
\end{equation}
Thus:
\begin{equation}
	\begin{aligned}
		\|z_k - x_k\| &\leq \frac{\beta_1}{1 - \beta_1} \|\Gamma_{k-1}\| \cdot \|\hat{m}_{k-1}\| \\
		&\leq \frac{\beta_1}{1 - \beta_1} (1 + \rho^k) \mu^{-1} \cdot \frac{\mathcal{H}_g}{1 - \beta_1^{k-1}}.
	\end{aligned}
\end{equation}
Moreover, since $\|\nabla f(x_k)\| \leq \mathcal{H}$, $\|\Gamma_k\| \leq (1 + \rho^{k+1}) \mu^{-1}$, it follows that:
\begin{equation}
	\begin{aligned}
		|R_k^{(3)}| &\leq \frac{1}{1 - \beta_1^k} \cdot L \cdot ( \frac{\beta_1}{1 - \beta_1} (1 + \rho^k) \mu^{-1} \cdot \frac{\mathcal{H}_g}{1 - \beta_1^{k-1}} ) \\
		& \quad \cdot (1 + \rho^{k+1}) \mu^{-1} \cdot \mathcal{H}.
	\end{aligned}
\end{equation}
By simplification, and noting that when $k \geq 2$, $1 - \beta_1^{k-1} \geq 1 - \beta_1$ and $1 - \beta_1^k \geq 1 - \beta_1$, it follows that:
\begin{equation}
	\begin{aligned}
		\mathbb{E}[|R_k^{(3)}|] &\leq \frac{1}{(1 - \beta_1)^2} \cdot L \cdot \frac{\beta_1}{1 - \beta_1} \\
		& \quad \cdot (1 + \rho^k) \cdot (1 + \rho^{k+1})\mu^{-2} \cdot \mathcal{H}_g \cdot \mathcal{H} \\
		&= \mathcal{O} (\beta_1 \cdot \rho^{2k}).
	\end{aligned}
\end{equation}
The above simplified bound indicates that the order of the residual term is proportional to $\beta_{1}$, with its decay rate controlled by $\rho^{2k}$. Similarly, since $\rho < 1$, this term remains controllable after summation.

\subsection{Bounding the Lipschitz Smoothing Term}
In the analysis of function value descent, there is also a second-order term arising from the Lipschitz smoothness:
\begin{equation}
	\frac{L}{2} \| z_{k+1} - z_k \|^2.
\end{equation}
From the update rule, we have
\begin{equation}
	z_{k+1} - z_k = -\frac{1}{1-\beta_1} \Gamma_k \hat{m}_k + \frac{\beta_1}{1-\beta_1} \Gamma_{k-1} \hat{m}_{k-1},
\end{equation}
which implies
\begin{equation}
	\| z_{k+1} - z_k \| \leq \frac{1}{1-\beta_1} \| \Gamma_k \| \|\hat{m}_k \| + \frac{\beta_1}{1-\beta_1} \| \Gamma_{k-1} \| \|\hat{m}_{k-1} \|.
\end{equation}
Using $\| \Gamma_k \| \leq (1+\rho^{k+1}) \mu^{-1}$ and $\| \hat{m}_k \| \leq \mathcal{H}_g / (1-\beta_1^k)$, we obtain
\begin{equation}
	\| z_{k+1} - z_k \| \leq \frac{(1+\rho^{k+1})\mathcal{H}_g}{(1-\beta_1)(1-\beta_1^k)\mu} + \frac{\beta_1(1+\rho^k)\mathcal{H}_g}{(1-\beta_1)(1-\beta_1^{k-1})\mu}.
\end{equation}
Squaring both sides yields
\begin{equation}
	\begin{aligned}
		\|z_{k+1} - z_k\|^2 &\leq 2(\frac{(1 + \rho^{k+1})\mathcal{H}_g}{(1 - \beta_1)(1 - \beta_1^k)\mu})^2 \\
		& \quad + 2(\frac{\beta_1(1 + \rho^k)\mathcal{H}_g}{(1 - \beta_1)(1 - \beta_1^{k-1})\mu})^2.
	\end{aligned}
\end{equation}
Taking expectations, and noting that when $k \geq 2$, the denominator has a lower bound (which depends on $\beta_1$), it follows that:
\begin{equation}
	\mathbb{E}[ \|z_{k+1} - z_k\|^2 ] \leq \mathcal{O}(\rho^{2k}).
\end{equation}
Therefore, the expectation of this term is $\mathcal{O}(\rho^{2k})$, which is summable and thus convergent.

\subsection{Final Summation and Convergence Rate Analysis}
Summing the expected inequality for function value descent from $k=1$ to $T$:
\begin{equation}
	\begin{aligned}
		\mathbb{E}[f(z_{T+1}) - f(z_1)] &\leq \sum_{k=1}^{T} \mathbb{E} [\langle \nabla f(z_k), z_{k+1} - z_k \rangle] \\
		& \quad + \frac{L}{2} \sum_{k=1}^{T} \mathbb{E} [\|z_{k+1} - z_k\|^2].
	\end{aligned}
\end{equation}
According to the previous decomposition, it holds that
\begin{equation}
	\begin{aligned}
		&\mathbb{E} [\langle \nabla f(z_k), z_{k+1} - z_k \rangle] \\
		& \quad = \mathbb{E} [ -\frac{1}{1 - \beta_1^k} \langle \nabla f(x_k), \Gamma_k \nabla f(x_k) \rangle ] + \mathbb{E} [R_k].
	\end{aligned}
\end{equation}

\begin{lemma} \label{lemma4}
	For the term $\langle \nabla f(x_k), \Gamma_k \nabla f(x_k) \rangle$, it holds that:
	\begin{equation}
		\langle \nabla f(x_k), \Gamma_k \nabla f(x_k) \rangle \geq C_1 \|\nabla f(x_k)\|^2,
	\end{equation}
	where $C_1 > 0$.
\end{lemma}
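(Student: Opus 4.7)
The plan is to reduce the quadratic form $\langle \nabla f(x_k), \Gamma_k \nabla f(x_k) \rangle$ to a purely spectral statement about the matrix $\Gamma_k$. The critical structural observation is that $M = \alpha I$ and $\hat{D}_k$ are both diagonal (since the Hutchinson estimator \eqref{eq20} returns a diagonal curvature approximation and the EMA \eqref{eq18} together with the bias correction \eqref{eq17} preserve diagonality), so every factor in $\Gamma_k = (I - (I - M\hat{D}_k)^{k+1})\hat{D}_k^{-1}$ is diagonal. The factors mutually commute, $\Gamma_k$ itself is diagonal, and the quadratic form decouples across coordinates. It then suffices to find a coordinate-uniform lower bound $[\Gamma_k]_{ii} \geq C_1 > 0$, after which the lemma follows by diagonal dominance.

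Next I would derive this bound explicitly. Writing $d_i := [\hat{D}_k]_{ii}$, one has
\begin{equation}
[\Gamma_k]_{ii} = \frac{1 - (1 - \alpha d_i)^{k+1}}{d_i},
\end{equation}
and I would invoke Lemma \ref{lemma2} to assert $d_i \in [\mu, G_d]$ together with Lemma \ref{lemma3} (instantiated at $M = \alpha I$) to conclude $\alpha d_i \in (0, 1/2]$, so that $1 - \alpha d_i \in [1/2, 1)$. This is precisely the regime in which the geometric-series identity $(1 - r^{k+1})/(1 - r) = \sum_{j=0}^{k} r^{j}$ applies cleanly with $r = 1 - \alpha d_i$ and strictly positive denominator $1 - r = \alpha d_i$. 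Substitution yields $[\Gamma_k]_{ii} = \alpha \sum_{j=0}^{k} (1 - \alpha d_i)^{j}$, and since every summand is non-negative, keeping only the $j = 0$ term already delivers the uniform bound $[\Gamma_k]_{ii} \geq \alpha$ for all coordinates $i$ and all $k \geq 0$.

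Combining the two steps, $\Gamma_k - \alpha I$ is diagonal with non-negative entries, hence $\Gamma_k \succeq \alpha I$ in the positive semi-definite order, and therefore $\langle \nabla f(x_k), \Gamma_k \nabla f(x_k) \rangle \geq \alpha \|\nabla f(x_k)\|^2$, proving the claim with $C_1 = \alpha > 0$. The main obstacle here is really only bookkeeping rather than any substantive difficulty: I need to verify that the diagonal clipping in Algorithm \ref{alg1} together with the EMA keeps $\hat{D}_k$ strictly positive so that $\hat{D}_k^{-1}$ is well defined, and that the step-size constraint from Lemma \ref{lemma3} is genuinely in force so that the geometric-series manipulation is valid. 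A secondary point worth flagging is that the constant $C_1 = \alpha$ is uniform in $k$; this uniformity will be essential later when Lemma \ref{lemma4} is used inside the telescoping summation over iterations in the convergence rate analysis.
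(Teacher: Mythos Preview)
Your proof is correct but takes a genuinely different route from the paper's. The paper argues via a change of variables $\mathcal{V} = \hat{D}_k^{-1/2}\nabla f(x_k)$ and a spectral-norm bound: it uses $\|\hat{D}_k^{1/2}(I - M\hat{D}_k)^{k+1}\hat{D}_k^{-1/2}\| \leq \rho^{k+1}$ (justified by ``similarity transformations preserve the spectral norm'', which is only true here because all factors are diagonal and commute) together with $\hat{D}_k^{-1}\succeq G_d^{-1}I$, arriving at $C_1 = (1-\rho^{k+1})/G_d$, a $k$-dependent constant that is then relaxed in a later step to $(1-\rho)/G_d = \alpha\mu/G_d$. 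You instead exploit the diagonal structure directly, rewrite each entry as a finite geometric series $[\Gamma_k]_{ii}=\alpha\sum_{j=0}^{k}(1-\alpha d_i)^j$, and keep only the $j=0$ term. This is more elementary, immediately yields a $k$-uniform constant (a point you correctly flag as important for the telescoping sum), and is in fact sharper, since $\alpha \geq \alpha\mu/G_d$. The only mild caveat is that your argument commits to $M=\alpha I$, whereas the paper's presentation is nominally written for a general $M$; however, the paper's own Lemma~\ref{lemma3} and experimental setup already specialize to scalar $M$, so nothing is lost.
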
 

\begin{proof}
	More specifically, for $\Gamma_k = (I - (I - M\hat{D}_k)^{k+1}) \hat{D}_k^{-1}$, based on Equation \eqref{aeq15} where $\|(I - M\hat{D}_k)^{k+1}\| \leq \rho^{k+1}$ and using norm inequalities, it follows that:

	Letting $\mathcal{V} = \hat{D}_k^{-1/2} \nabla f(x_k)$:
	\begin{equation}
		\begin{aligned}
			&\langle \nabla f(x_k), \Gamma_k \nabla f(x_k) \rangle \\
			&\quad = \nabla f(x_k)^T [I - (I - M\hat{D}_k)^{k+1}] \hat{D}_k^{-1} \nabla f(x_k) \\
			& \quad = \mathcal{V}^T \hat{D}_k^{1/2} [I - (I - M\hat{D}_k)^{k+1}] \hat{D}_k^{-1/2} \mathcal{V} \\
			& \quad = \mathcal{V}^T [I - \hat{D}_k^{1/2}(I - M\hat{D}_k)^{k+1} \hat{D}_k^{-1/2}] \mathcal{V}.
		\end{aligned}
	\end{equation}
	Since $\|(I - M \hat{D}_k)^{k+1}\| \leq \rho^{k+1}$ and similarity transformations preserve the spectral norm, we have
	\begin{equation}
		\|\hat{D}_k^{1/2}(I - M\hat{D}_k)^{k+1} \hat{D}_k^{-1/2}\| \leq \rho^{k+1}.
	\end{equation}
	It follows that
	\begin{equation}
		\begin{aligned}
			&|\mathcal{V}^T \hat{D}_k^{1/2}(I - M\hat{D}_k)^{k+1} \hat{D}_k^{-1/2} \mathcal{V}| \\
			& \quad \leq \|\hat{D}_k^{1/2}(I - M\hat{D}_k)^{k+1} \hat{D}_k^{-1/2}\| \cdot \| \mathcal{V} \|^2 \\
			& \quad \leq \rho^{k+1} \| \mathcal{V} \|^2 \label{aeq16}.
		\end{aligned}
	\end{equation}
	Hence,
	\begin{equation}
		\begin{aligned}
			&\mathcal{V}^T [I - \hat{D}_k^{1/2}(I - M\hat{D}_k)^{k+1} \hat{D}_k^{-1/2}] \mathcal{V} \\
			& \quad = \| \mathcal{V} \|^2 - \mathcal{V}^T \hat{D}_k^{1/2}(I - M\hat{D}_k)^{k+1} \hat{D}_k^{-1/2} \mathcal{V} \\
			& \quad \quad \geq \| \mathcal{V} \|^2 - \rho^{k+1} \| \mathcal{V} \|^2 \\
			& \quad \quad \quad = (1 - \rho^{k+1}) \| \mathcal{V} \|^2 \label{aeq17}.
		\end{aligned}
	\end{equation}
	By setting $\mathcal{V} = \hat{D}_k^{-1/2} \nabla f(x_k)$, it follows that:
	\begin{equation}
		\| \mathcal{V} \|^2 = \mathcal{V}^T \mathcal{V} = \nabla f(x_k)^T \hat{D}_k^{-1} \nabla f(x_k).
	\end{equation}
	Since $\hat{D}_k \preceq G_d I$, it follows that $\hat{D}_k^{-1} \succeq G_d^{-1} I$. Therefore:
	\begin{equation}
		\nabla f(x_k)^T \hat{D}_k^{-1} \nabla f(x_k) \geq G_d^{-1} \| \nabla f(x_k) \|^2.
	\end{equation}
	Substituting into Equation \eqref{aeq17} yields:
	\begin{equation}
		\langle \nabla f(x_k), \Gamma_k \nabla f(x_k) \rangle \geq \frac{1 - \rho^{k+1}}{G_d} \| \nabla f(x_k) \|^2.
	\end{equation}
	At this point, it follows that $C_1 = \frac{1 - \rho^{k+1}}{G_d} > 0$.
\end{proof}

Therefore, the main descent term satisfies:
\begin{equation}
	-\frac{1}{1 - \beta_1^k} \langle \nabla f(x_k), \Gamma_k \nabla f(x_k) \rangle \leq -\frac{1-\rho^{k+1}}{(1 - \beta_1^k) G_d} \| \nabla f(x_k) \|^2.
\end{equation}
Since $0 \leq \beta_{1} < 1$ and $\rho < 1$, it holds that $1 - \beta_{1}^{k} \leq 1$ (taking the upper bound). When $k \geq 2$, $1 - \rho^{k+1} \geq 1 - \rho$ (taking the lower bound).
Using Equation \eqref{aeq16} and applying the ratio of the lower bound to the upper bound, the upper bound can be further relaxed as:
\begin{equation}
	-\frac{1}{1 - \beta_1^k} \langle \nabla f(x_k), \Gamma_k \nabla f(x_k) \rangle \leq -\frac{1-\rho}{G_d} \| \nabla f(x_k) \|^2.
\end{equation}
In expectation, the focus is on the summation:
\begin{equation}
	\begin{aligned}
		&\sum_{k=1}^T \mathbb{E} [ -\frac{1}{1 - \beta_1^k} \langle \nabla f(x_k), \Gamma_k \nabla f(x_k) \rangle ] \\ 
		& \quad \leq -C_2 \sum_{k=1}^T \mathbb{E} [ \| \nabla f(x_k) \|^2 ],
	\end{aligned}
\end{equation}
where, the constant term $C_2$ is independent of $k$, given by $C_2 = \frac{1 - \rho}{G_d} > 0$.

For the residual terms, based on the previous bounds, it holds that:
\begin{equation}
	\begin{aligned}
		&\sum_{k=1}^{T} \mathbb{E}[|R_k|] \\
		& \quad \leq \sum_{k=1}^{T} [ \mathcal{O}(\beta_1(1 + \rho^k)) + \mathcal{O}(1) + \mathcal{O}(\beta_1 \cdot \rho^{2k}) ] = \mathcal{O}(1).
	\end{aligned}
\end{equation}
Similarly, for the Lipschitz term, it follows that
\begin{equation}
	\frac{L}{2} \sum_{k=1}^{T} \mathbb{E} [ \|z_{k+1} - z_k\|^2 ] = \frac{L}{2} \sum_{k=1}^{T} \mathcal{O}(\rho^{2k}) = \mathcal{O}(1).
\end{equation}
Therefore, the overall sum in Equation \eqref{aeq19} satisfies
\begin{equation}
	\mathbb{E}[f(z_{T+1})] - f(z_1) \leq -C_2 \sum_{k=1}^{T} \mathbb{E} [ \|\nabla f(x_k)\|^2 ] + C_3,
\end{equation}
where, $C_3$ denotes a constant. Rearranging yields:
\begin{equation}
	\begin{aligned}
		C_2 \sum_{k=1}^{T} \mathbb{E} [ \|\nabla f(x_k)\|^2 ] &\leq f(z_1) - \mathbb{E}[f(z_{T+1})] + C_3 \\
		&\leq f(z_1) - f^* + C_3.
	\end{aligned}
\end{equation}
It follows that
\begin{equation}
	\frac{1}{T} \sum_{k=1}^{T} \mathbb{E} [ \| \nabla f(x_k) \|^2 ] \leq \frac{f(z_1) - f^* + C_3}{C_2T} = \mathcal{O} ( \frac{1}{T} ).
\end{equation}
Since the minimum gradient norm is necessarily less than or equal to the average value:
\begin{equation}
	\mathop{\mathrm{min}}\limits_{k \in [1,T]}\mathbb{E} [ \| \nabla f(x_k) \|^2 ] \leq \mathcal{O} ( \frac{1}{T} ).
\end{equation}

\bibliographystyle{cas-model2-names}

\bibliography{cas-refs}

\end{document}